\theoremstyle{plain}
\newtheorem{thm}{Theorem}[section]
\newtheorem{lemma}[thm]{Lemma}
\newtheorem{cor}[thm]{Corollary}
\newtheorem*{conj}{Conjecture}
\newtheorem{question}{Question}
\theoremstyle{definition}
\theoremstyle{remark}
\newtheorem{remark}[thm]{Remark}
\newcommand{\C}{\mathbb{C}}
\newcommand{\Z}{\mathbb{Z}}
\newcommand{\Q}{\mathbb{Q}}
\newcommand{\N}{\mathbb{N}}
\newcommand{\sptc}{\operatorname{spt-crank}}
\newcommand{\spt}{\operatorname{spt}}
\newcommand{\tcrk}{\operatorname{t-core-crank}}
\newcommand{\ork}{\operatorname{o-rank}}
\title{Equidistribution and partition polynomials}
\author{Amanda Folsom}
\address{Department of Mathematics and Statistics, Seeley Mudd Building, Amherst College, Amherst, MA, USA, 01002}
\email{afolsom@amherst.edu}
\author{Joshua Males}
\address{School of Mathematics, Fry Building, Woodland Road, Bristol, BS8 1UG}
\email{joshua.males@bristol.ac.uk}
\author{Larry Rolen}
\address{Department of Mathematics, 1420 Stevenson Center, Vanderbilt University, Nashville, TN 37240}
\email{larry.rolen@vanderbilt.edu}
\begin{document}

	\begin{abstract}
		Using  equidistribution criteria, we establish  divisibility by cyclotomic polynomials of several partition polynomials of interest, including $spt$-crank, overpartition pairs, and $t$-core partitions.  As corollaries, we obtain new proofs of various Ramanujan-type congruences for   associated partition functions.  Moreover, using results of Erd\"os and Tur\'an, we establish the equidistribution of roots of partition polynomials on the unit circle including those for the rank, crank, $spt$, and unimodal sequences.  Our results complement earlier work on this topic by Stanley, Boyer-Goh, and others. We explain how our methods may be used to establish similar results for other partition  polynomials of interest, and offer many related open questions and examples.
			\end{abstract}
	
	\subjclass[2020]{05A17, 11P83, 30C15, 11C08, 26C10.}
	
	\keywords{Integer partitions; partition congruences; partition ranks and cranks; cyclotomic polynomials; equidistribution of roots of polynomials.}
	
	\thanks{The first author is  grateful for support from NSF Grant DMS-2200728. The research of the second author conducted for this paper is supported by the Pacific Institute for the Mathematical Sciences (PIMS). The research and findings may not reflect those of the Institute. This work was supported by a grant from the Simons Foundation (853830, LR). The third author is also grateful for support from a 2021-2023 Dean's Faculty Fellowship from Vanderbilt University.  The first author thanks the Max  Planck Institute for Mathematics, Bonn, Germany,  for its hospitality and support during portions of the writing of this paper.
	The authors also acknowledge the Vanderbilt University ``100 Years of Mock Theta Functions:
New Directions in Partitions, Modular Forms, and Mock Modular Forms'' Conference in May 2022, at which they discussed the topic of this paper and related ideas. This conference was supported by The Shanks Endowment,  Vanderbilt University, NSF Grant: “Conference on "100 Years of Mock Theta Functions; New Directions in Partitions, Modular Forms, and Mock Modular Forms"” award number: DMS-1951393, and NSA grant “100 Years of Mock Theta Functions,” award number: H98230-20-1-0022.
	}
	
	\maketitle

	\section{Introduction}\label{sec_intro} 
	 
	 	The theory of ranks and cranks was initiated in order to study congruences for the partition function $p(n)$, which counts the number of integer partitions of $n$, that is, the number of ways to write a non-negative integer $n$ as a non-increasing sum of positive integers (called ``parts'').  For example, $p(5)=7$, as $$5 = 4 + 1 = 3 + 2 = 3 + 1 + 1 = 2 + 2 + 1 = 2 + 1 + 1 + 1 = 1+ 1+ 1+1+1.$$  The definitions of the rank and crank of a partition $\lambda$ may appear artificial at first inspection:
	\begin{align*}   \text{rank}(\lambda) &:= \text{largest part of } \lambda - \text{number of parts of } \lambda,\end{align*}
	\begin{equation}\label{def_crank}
	\begin{aligned}\text{crank}(\lambda) &:= \begin{cases} \text{largest part of } \lambda & \text{if 1 is not a part of } \lambda, \\
	\mu(\lambda) - o(\lambda) & \text{if 1 is a part of } \lambda, \end{cases}
	\end{aligned}\end{equation}	
	where $\mu(\lambda)$ denotes the number of parts of $\lambda$ strictly larger than the number of $1$s in $\lambda$, and $o(\lambda)$ denotes the number of $1$s in $\lambda$.	However, these two perhaps seemingly peculiar partition statistics play significant roles in understanding partition numbers $p(n)$.    After Ramanujan conjectured his famous congruences modulo $5, 7$ and $11$ (for all $n\in\mathbb N_0$)
	\begin{equation}\label{eqn_ramcong} 
	\begin{aligned}
	p(5n+4)  \equiv 0 \pmod 5,   \  \\
	p(7n+5)  \equiv 0 \pmod 7,  \ \\
	p(11n+6)  \equiv 0 \pmod{11},
	\end{aligned}\end{equation}
	  while still an undergraduate in 1944, Dyson  \cite{Dyson1944} defined the rank of a partition in order to try and combinatorially explain them.   The following table illustrates how Dyson's rank divides the partitions of $5$ into $7$ groups of equal size:
	\begin{align*}
	\begin{array}{r|c|c}
	\text{partition} & \text{rank} & \text{rank} \pmod 7 \\ \hline
	5 & 5-1 & 4 \\
	4+1 & 4-2 & 2 \\
	3+2 & 3-2 & 1 \\
	3 + 1 + 1 & 3 - 3 & 0 \\
	2 + 2 + 1 & 2 - 3 & 6 \\
	2 + 1 + 1 + 1 & 2 - 4 & 5 \\
	1+1+1+1+1 & 1 - 5 &  \ 3.
	\end{array}
	\end{align*}
Further investigations led Dyson to conjecture that the rank would always divide the partitions of $7n+5$ (resp. $5n+4$) for any $n\in \mathbb N_0$ into 
	$7$ (resp. $5$) groups of equal size when reduced mod $7$ (resp. mod $5$), thereby explaining Ramanujan's congruences mod ${7}$ and mod ${5}$.  This was proved by Atkin and Swinnerton-Dyer in 1954 \cite{ASD}, and has led to further important related results in the literature.  A quick calculation reveals that the rank fails to explain Ramanujan's congruences mod $11$ in the same way as we now know it does for the moduli $5$ and $7$, and this led Dyson to conjecture the existence of another partition statistic  which would simultaneously explain all three of Ramanujan's congruences in \eqref{eqn_ramcong}.   Over four decades after Dyson's paper, important work of Garvan and Andrews \cite{AGBAMS, GarvanTAMS} led them to the definition of the crank  statistic in \eqref{def_crank}, resolving Dyson's question. 
		
	Also playing key roles in understanding partitions, ranks, and cranks, are their generating functions.  To describe this, we let $(a;q)_n \coloneqq \prod_{k=0}^{n-1} (1-aq^k)$ be the usual $q$-Pochhammer symbol $(n\in \mathbb N_0\cup \{\infty\})$. 
	Then we have that 
	\begin{align}\label{eqn: rank gen fn}
		\sum_{n \geq 0} \sum_{m \in \Z} N(m,n) w^m q^n = :   \sum_{n\geq 0} \operatorname{rank}_n(w) q^n = \sum_{n \geq 0} \frac{q^{n^2}}{(wq;q)_n (w^{-1}q;q)_n},	\end{align}
		and
	\begin{align}\label{eqn: crank gen fn}
	\sum_{n\geq 0} \sum_{m\in\mathbb Z} M(m,n) w^m q^n = : \sum_{n\geq 0} \operatorname{crank}_n(w) q^n = 
	\prod_{n\geq 1} \frac{(1-q^n)}{(1-wq^n)(1-w^{-1}q^n)}, 
	 \end{align}
	where $N(m,n)$ (resp. $M(m,n)$) denotes the number of partitions of $n$ with rank (resp. crank) $m$.  For example, when $w=1$ in \eqref{eqn: rank gen fn} we have that the partition generating function 
	$$\sum_{n\geq 0} p(n) q^n = \sum_{n \geq 0} \frac{q^{n^2}}{(q;q)^2_n} = \prod_{n\geq 1} \frac{1}{(1-q^n)}$$
	is essentially the reciprocal of the weight $1/2$ modular form $$\eta(\tau) = q^{\frac{1}{24}} \prod_{n\geq 1} (1-q^n),$$ with $q=e^{2\pi i \tau}, \tau \in \mathbb H$,  the upper-half of the complex plane. (Here we have also used Euler's product identity for the partition generating function, see e.g.\ \cite{AndrewsPartitions}.)  In general, when viewed as a two-variable function in $(z,\tau) \in \mathbb C\times \mathbb H$ (additionally letting $w=e^{2\pi i z}$),  the rank generating funciton in  \eqref{eqn: rank gen fn} does not posses the same strict modular properties as it does at $w=1$, but thanks to influential work of Zwegers~\cite{ZwegersThesis}, Bringmann-Ono~\cite{BringmannOno}, and Zagier~\cite{ZagierRanks},  we now know that (after a minor normalization) it is a mock Jacobi form.  The crank generating function in \eqref{eqn: crank gen fn} too possesses modular properties, and is (up to a minor normalization) a (true) Jacobi form.  Such connections between partition generating functions and modular(-type) forms have led to significant advances in our understanding in both the theory of partitions and in modular forms (broadly speaking).  For example, work of Hardy-Ramanujan and Rademacher established an exact formula for $p(n)$ using the modularity of its generating function;  we also now know to look at families of partition generating functions as potential explicit sources of holomorphic parts of harmonic Maass forms.  (For more on these topics, see e.g.\ \cite{BFOR}.)  
	
	When expanded as a $q$-series, the polynomial (in $w$) coefficients $\operatorname{rank}_n(w)$ and $\operatorname{crank}_n(w)$ of the generating functions \eqref{eqn: rank gen fn} and \eqref{eqn: crank gen fn} clearly carry important combinatorial information -- which it turns out may be revealed algebraically.  	To describe this, we review work of 
	Stanton, and Bringmann et al, and after Stanton define the modified rank and crank polynomials by
	\begin{align*} \operatorname{rank}_{\ell,n}^*(w) &:= 	\operatorname{rank}_{\ell n + \beta}(w) + w^{\ell n + \beta -2} - w^{\ell n + \beta - 1} +w^{2-\ell n -\beta} - w^{1-\ell n - \beta}, \\
	\operatorname{crank}_{\ell,n}^*(w) &:= 	\operatorname{crank}_{\ell n + \beta}(w) + w^{\ell n + \beta -\ell} - w^{\ell n + \beta } +w^{\ell-\ell n -\beta} - w^{-\ell n - \beta},
	\end{align*}
	where $\beta := \ell - (\ell^2-1)/24$.  In unpublished notes, Stanton made some related conjectures on divisibility by the cyclotomic polynomials $\Phi_\ell(w)$, including the following \cite{Stanton}.
	\begin{conj}[Stanton \cite{Stanton}]
	For $n\in \mathbb N_0$, the following are Laurent polynomials with non-negative coefficients:
	\begin{align*}
	\frac{\operatorname{rank}_{5,n}^*(w)}{\Phi_5(w)}, \frac{\operatorname{rank}_{7,n}^*(w)}{\Phi_7(w)}, 
	\end{align*}
	and
	\begin{align*}
	\frac{\operatorname{crank}_{5,n}^*(w)}{\Phi_5(w)}, \frac{\operatorname{crank}_{7,n}^*(w)}{\Phi_7(w)}, \frac{\operatorname{crank}_{11,n}^*(w)}{\Phi_{11}(w)}.
	\end{align*}
	\end{conj}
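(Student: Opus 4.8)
The plan is to separate the two assertions in Stanton's conjecture: \emph{divisibility} (that each ratio is a genuine Laurent polynomial) and \emph{non-negativity} of the resulting coefficients. Divisibility should follow cleanly from an equidistribution criterion combined with the (mock) modularity recorded above, while non-negativity is where I expect the real difficulty to lie.

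First I would record the guiding criterion. Since each $\ell \in \{5,7,11\}$ is prime, $\Phi_\ell(w) = 1 + w + \cdots + w^{\ell-1}$ is the minimal polynomial of a primitive $\ell$-th root of unity $\zeta_\ell$ over $\Q$, so for $P(w) = \sum_m c_m w^m \in \Q[w,w^{-1}]$ we have $\Phi_\ell(w) \mid P(w)$ if and only if $P(\zeta_\ell) = 0$. Grouping exponents by residue, $P(\zeta_\ell) = \sum_{r=0}^{\ell-1} S_r\, \zeta_\ell^r$ with $S_r := \sum_{m \equiv r \,(\ell)} c_m$, and since the only $\Q$-linear relation among $1,\zeta_\ell,\dots,\zeta_\ell^{\ell-1}$ is a multiple of $\Phi_\ell(\zeta_\ell)=0$, one obtains the equidistribution criterion: $\Phi_\ell(w)\mid P(w)$ exactly when the residue sums $S_0,\dots,S_{\ell-1}$ are all equal. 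I would then observe, by a direct computation of exponents modulo $\ell$, that the four correction monomials defining $\operatorname{rank}_{\ell,n}^*(w)$ and $\operatorname{crank}_{\ell,n}^*(w)$ contribute $0$ to every $S_r$ (their exponents collapse onto two residue classes with canceling signs). Hence divisibility of the \emph{modified} polynomials is equivalent to the statement that the raw coefficients $N(m,\ell n+\beta)$, respectively $M(m,\ell n+\beta)$, are equidistributed across residue classes modulo $\ell$.

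To prove this equidistribution without presupposing the congruences (which I wish to recover as corollaries), I would specialize the generating functions at the torsion point $w=\zeta_\ell$, i.e.\ $z = 1/\ell$. Because \eqref{eqn: crank gen fn} is a Jacobi form and \eqref{eqn: rank gen fn} is, up to normalization, a mock Jacobi form, these specializations are a modular form and a mock modular form of explicitly computable weight and level. The required vanishing $\operatorname{crank}_{\ell n+\beta}(\zeta_\ell)=0$ and $\operatorname{rank}_{\ell n+\beta}(\zeta_\ell)=0$ then becomes the vanishing of the corresponding holomorphic $q$-coefficients on the progression $n\equiv\beta\pmod{\ell}$, which I would extract via a dissection or $U_\ell$-operator argument on the resulting (mock) modular forms. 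By the criterion, each listed ratio is then a Laurent polynomial; moreover, since equidistribution gives $p(\ell n+\beta) = \sum_m N(m,\ell n+\beta) = \ell\, S_0 \equiv 0 \pmod{\ell}$, and similarly with $M$, one recovers \eqref{eqn_ramcong} as an immediate corollary.

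The genuinely hard part is \emph{non-negativity} of the quotient coefficients, about which the criterion says nothing. Here the role of the corrections becomes essential: although they are invisible to the residue sums, they remove the boundary ``spikes'' of the rank and crank polynomials (for the rank they cancel the unique extremal coefficient at $w^{\pm(\ell n+\beta-1)}$ and redistribute its mass inward), and Stanton's normalization is evidently engineered so that long division by $\Phi_\ell$ never introduces a negative coefficient. I would attack this by seeking a combinatorial model for the quotients $\operatorname{rank}_{\ell,n}^*(w)/\Phi_\ell(w)$ and $\operatorname{crank}_{\ell,n}^*(w)/\Phi_\ell(w)$ — an explicit sieve or sign-reversing involution realizing them as honest generating polynomials for a refined partition statistic — or, failing that, a direct positivity estimate for the coefficients of the associated modular forms. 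Establishing such positivity uniformly in $n$ is, I expect, the main obstacle: it is plausible that the divisibility half can be settled in full generality, while non-negativity requires additional combinatorial input and may be accessible only in the specific cases listed.
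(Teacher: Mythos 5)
Your divisibility half is essentially correct and follows the same mechanism the paper records: your residue-sum criterion is precisely Lemma~\ref{Lem: equidist} (Lemma~2.4 of \cite{BGRT}); your observation that the correction monomials are invisible to the residue sums checks out (for the crank the four exponents collapse pairwise onto the classes $\pm\beta \pmod{\ell}$ with opposite signs, while for the rank the cancellation needs $2\beta \equiv 3 \pmod{\ell}$, which indeed holds for $\ell = 5,7$); and the resulting equidistribution statements for $N(m,\ell n+\beta)$ and $M(m,\ell n+\beta)$ are the theorems of Atkin--Swinnerton-Dyer \cite{ASD} and Garvan \cite{GarvanTAMS}. Your proposed proof of those equidistribution facts via specialization at $w=\zeta_\ell$ and dissection is only a sketch, but it is the standard route, so this half is fine modulo citing or reproducing known results.

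The genuine gap is the non-negativity half, which you explicitly leave open, and the idea you are missing is not a combinatorial model for the quotient nor an analytic positivity estimate, but Lemma~\ref{Lem: pos coeffs} (Lemma~3.1 of \cite{BGRT}): if $f(w)$ is a \emph{symmetric, unimodal} Laurent polynomial divisible by $\Phi_\ell(w)$ for an odd prime $\ell$, then the coefficients of $f(w)/\Phi_\ell(w)$ are automatically non-negative. This converts positivity of the quotient into a statement about $f$ itself, with no need to understand the quotient's coefficients at all. Once divisibility is known, the conjecture therefore reduces to showing that $\operatorname{rank}^*_{\ell,n}(w)$ and $\operatorname{crank}^*_{\ell,n}(w)$ are symmetric and unimodal; this is exactly what Stanton's corrections are engineered to achieve --- they repair the failure of unimodality at the extreme coefficients (e.g.\ $N(\ell n+\beta-1,\ell n+\beta)=1$ while $N(\ell n+\beta-2,\ell n+\beta)=0$, so the unmodified rank polynomial is \emph{not} unimodal) --- and proving that unimodality is the actual content of the cited resolutions, \cite{BGRT} for the crank cases and \cite{BMR} for the rank cases. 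Your closing suggestion that positivity might require a sieve, a sign-reversing involution, or case-by-case estimates points away from the argument that works; without the unimodality lemma your proposal does not establish the non-negativity assertion, so it does not prove the statement as claimed.
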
 
	The connection between these conjectured -- and now proved  in \cite{BGRT,BMR} -- algebraic properties of polynomials and the combinatorial divisibilities prescribed by Ramanujan's congruences \eqref{eqn_ramcong} is given by Lemma \ref{Lem: equidist} \cite[Lemma 2.4]{BGRT}:  in short, divisibility by cyclotomic polynomials is equivalent to an equidistribution result on coefficients of (rank and crank) polynomials.    As explained in \cite{BGRT}, Stanton's modified rank and crank polynomials are designed to ensure  positivity, with the eventual goal of uncovering a combinatorial interpretation of what these positive coefficients count, thereby leading to a map between rank or crank congruence classes. Lemma \ref{Lem: pos coeffs}  \cite[Lemma 3.1]{BGRT} reveals that such positivity is related to unimodality of coefficients.
	
	In the sections that follow, using  equidistribution criteria, we establish  divisibility by cyclotomic polynomials of several partition polynomials of interest, including $spt$-crank, overpartition pairs, and $t$-core partitions (see Theorem \ref{thm: spt}, Theorem \ref{thm: ork3}, and Theorem \ref{thm: tcore}).  As corollaries, we obtain new proofs of various Ramanujan-type congruences for associated partition functions (see Corollary \ref{cor: spt57}, Corollary \ref{cor: pp3}, and Corollary \ref{cor: tcore}).  Moreover, using results of Erd\"os and Tur\'an, we establish the equidistribution of roots of partition polynomials on the unit circle including those for the rank, crank, $spt$, and unimodal sequences (see Theorem \ref{thm: rcequid}, Theorem \ref{thm: sptequid}, and Theorem \ref{thm: uniequid}). Our results complement earlier work on this topic by Stanley, Boyer-Goh \cite{BG1, BG}, and Boyer-Parry \cite{BP}.   We also explain how our methods may be used to establish similar results for other partition polynomials of interest, and offer many related open questions and examples in the ensuing sections.

\subsection*{Acknowledgements}
To be added.
We thank Ian Wagner, William Craig and Cristina Ballantine, and the referee for helpful comments on previous versions of the paper.
		
\section{Preliminaries}

We use the following lemmas of Bringmann--Gomez--Rolen--Tripp \cite{BGRT}. To state these, recall that the $\ell$th cyclotomic polynomial $\Phi_\ell(w)$   is the unique irreducible polynomial in $\mathbb Z[w]$ which divides $w^\ell -1$ but which does not divide $w^j-1$ for any $1\leq j < \ell$ ($\ell \in \mathbb N, \ell \geq 2$), and $\Phi_1(w) := w-1$.   We also define (for $(r,\ell) \in \mathbb Z \times \mathbb N$) 
\begin{align*}
		\widehat{f}_{r,\ell}  \coloneqq \sum_{j \equiv r \pmod{\ell}} [w^j] f(w).
\end{align*}  Here, by $[w^j] f(w)$ we mean the coefficient of $w^j$ in (the series expansion of) $f(w)$.
We also have the notion of divisibility in $\Q[w^{-1},w]$, where we say that $\Phi_\ell(w)$ divides $f(w)$ if there exists some $g(w) \in \Q[w^{-1},w]$ such that $f(w) = g(w) \Phi_\ell(w)$. The following lemma relates divisibility of polynomials to an equidistribution criterion over arithmetic progressions.

\begin{lemma}[Lemma 2.4 of \cite{BGRT}]\label{Lem: equidist}
	Let $f(w)$ be a Laurent polynomial in $\Q[w^{-1},w]$ and $\ell$ a prime. Then $\Phi_\ell(w)$ divides $f(w)$ in $\Q[w^{-1},w]$ if and only if 
	\begin{align*}
		\widehat{f}_{a,\ell} = \widehat{f}_{b,\ell}
	\end{align*}
for all $a,b$.
\end{lemma}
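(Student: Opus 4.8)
The plan is to translate both sides of the claimed equivalence into statements about the values of $f$ at the primitive $\ell$th roots of unity, and then to link these through the (finite) discrete Fourier transform.

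First I would record the algebraic reduction. Since $\ell$ is prime, $\Phi_\ell(w) = 1 + w + \cdots + w^{\ell-1}$ is the minimal polynomial over $\Q$ of $\zeta := e^{2\pi i/\ell}$, and its roots are exactly the nontrivial $\ell$th roots of unity $\zeta^c$, $c = 1, \ldots, \ell-1$. To handle divisibility in the Laurent ring I would write $f(w) = w^{-N} g(w)$ with $g \in \Q[w]$, and observe that $\Phi_\ell$ is coprime to the unit $w$ (as $\Phi_\ell(0) = 1 \neq 0$). Passing to the localization $\Q[w^{-1},w] = \Q[w]_w$, the prime $\Phi_\ell$ divides $f$ there if and only if it divides $g$ in $\Q[w]$, i.e.\ if and only if $g(\zeta) = 0$; and since $g(\zeta) = \zeta^N f(\zeta)$ with $\zeta \neq 0$, this is equivalent to $f(\zeta) = 0$. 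By irreducibility (Galois conjugation), this is in turn equivalent to $f(\zeta^c) = 0$ for every $c = 1,\ldots,\ell-1$.

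The second step is the Fourier identity. Grouping the monomials of $f$ by the residue of their exponent modulo $\ell$ and using $\zeta^{c\ell} = 1$, I obtain for each $c$ that
\[
 f(\zeta^c) = \sum_{r=0}^{\ell-1} \widehat{f}_{r,\ell}\, \zeta^{cr},
\]
so that the vector $\bigl(f(\zeta^0), \ldots, f(\zeta^{\ell-1})\bigr)$ is precisely the discrete Fourier transform of the residue-class sums $\bigl(\widehat{f}_{0,\ell}, \ldots, \widehat{f}_{\ell-1,\ell}\bigr)$; note these are well defined since $\widehat{f}_{a,\ell}$ depends only on $a \bmod \ell$, whence the hypothesis ``$\widehat{f}_{a,\ell} = \widehat{f}_{b,\ell}$ for all $a,b$'' simply asserts that all $\ell$ of these sums coincide. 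Now I would invert the transform in both directions. For the forward implication, $\Phi_\ell \mid f$ gives $f(\zeta^c) = 0$ for $c = 1, \ldots, \ell-1$, so Fourier inversion yields $\widehat{f}_{r,\ell} = \tfrac{1}{\ell}\sum_{c=0}^{\ell-1} f(\zeta^c)\zeta^{-cr} = \tfrac{1}{\ell} f(1)$, independent of $r$. Conversely, if all $\widehat{f}_{r,\ell}$ equal a common value $\kappa$, then for each $c \not\equiv 0 \pmod{\ell}$ the identity above collapses to $f(\zeta^c) = \kappa \sum_{r=0}^{\ell-1} \zeta^{cr} = 0$ by the vanishing of the geometric sum of $\ell$th roots of unity; in particular $f(\zeta) = 0$, so $\Phi_\ell \mid f$ by the first step.

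I expect the only genuine subtlety to lie in the first step — the careful passage between divisibility in $\Q[w^{-1},w]$ and vanishing at $\zeta$ — which rests on the coprimality of $\Phi_\ell$ with $w$ and on the irreducibility of $\Phi_\ell$; once that bookkeeping is settled, the remainder is a clean finite linear-algebra computation. Primality of $\ell$ enters essentially, and only, through the fact that \emph{all} nontrivial $\ell$th roots of unity are roots of $\Phi_\ell$, which is exactly what makes the full Fourier inversion (rather than a single evaluation) available.
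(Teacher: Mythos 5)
The paper itself contains no proof of this lemma: it is imported verbatim as Lemma 2.4 of \cite{BGRT}, so there is no internal argument to compare yours against. Your proof is correct and complete. The localization step legitimately reduces divisibility in $\Q[w^{-1},w]$ to divisibility in $\Q[w]$ (using that $\Phi_\ell$ is irreducible and coprime to the unit $w$), the identity $f(\zeta^c)=\sum_{r=0}^{\ell-1}\widehat{f}_{r,\ell}\,\zeta^{cr}$ is the correct roots-of-unity filter, and both implications then follow by discrete Fourier inversion exactly as you describe. You also correctly isolate the one place primality is essential: for prime $\ell$ every nontrivial $\ell$th root of unity is a root of $\Phi_\ell(w)=1+w+\cdots+w^{\ell-1}$, so vanishing at a single primitive root propagates by Galois conjugation to all the evaluations needed for the inversion formula; for composite $\ell$ the statement would fail. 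This roots-of-unity/DFT argument is the standard proof of such equidistribution--divisibility equivalences and is in the spirit of the argument in \cite{BGRT}.
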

In particular, by showing divisibility by the $\ell$-th cyclotomic polynomial, we immediately obtain equidistribution modulo $\ell$ and vice-versa. The next result gives non-negativity of the coefficients of ${f(w)}/{\Phi(w)}$ under certain conditions.
\begin{lemma}[Lemma 3.1 of \cite{BGRT}]\label{Lem: pos coeffs}
	Let $f(w)$ be a symmetric unimodal Laurent polynomial that is divisible by $\Phi_\ell(w)$ for an odd prime $\ell$. Then the coefficients of $\frac{f(w)}{\Phi_\ell(w)}$ are non-negative. Moreover, if $f(w)$ is strictly unimodal then the coefficients of $\frac{f(w)}{\Phi_\ell(w)}$ are positive.
\end{lemma}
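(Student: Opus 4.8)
The plan is to work directly with the coefficient sequences and exploit the factorization $\Phi_\ell(w) = (w^\ell-1)/(w-1)$. Write $f(w) = \sum_{j=-N}^{N} a_j w^j$ with $a_{-j} = a_j$; symmetry together with unimodality means $a_{-N} \le a_{-N+1} \le \cdots \le a_0 \ge \cdots \ge a_N$, and (as is standard for unimodal partition polynomials) the coefficients are non-negative, so in particular the extreme coefficient satisfies $a_{-N} = a_N \ge 0$. Set $g(w) := f(w)/\Phi_\ell(w) = \sum_j b_j w^j \in \Q[w^{-1},w]$, which exists as a Laurent polynomial by hypothesis. Multiplying the identity $g\,\Phi_\ell = f$ by $(w-1)$ and using $(w-1)\Phi_\ell(w) = w^\ell - 1$ gives the clean relation $(w-1) f(w) = (w^\ell - 1) g(w)$.

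First I would read off a recursion from this relation. Comparing coefficients of $w^j$ on both sides yields $a_{j-1} - a_j = b_{j-\ell} - b_j$, that is,
\begin{equation*} b_j = b_{j-\ell} + (a_j - a_{j-1}). \end{equation*}
Since $g$ is a Laurent polynomial supported (one checks from degrees) on $-N \le j \le N-\ell+1$, we have $b_j = 0$ for $j < -N$, and unfolding the recursion downward terminates:
\begin{equation*} b_j = \sum_{k \ge 0,\ j-k\ell \ge -N} \big(a_{j-k\ell} - a_{j-k\ell-1}\big), \end{equation*}
a finite sum of consecutive-coefficient increments taken along the arithmetic progression $j, j-\ell, j-2\ell, \dots$.

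Next I would reduce to the lower half using symmetry. From $f(1/w) = f(w)$ and $\Phi_\ell(1/w) = w^{-(\ell-1)}\Phi_\ell(w)$ one obtains $g(1/w) = w^{\ell-1} g(w)$, so $g$ is palindromic about $-(\ell-1)/2$; hence $b_j = b_{-j-\ell+1}$ and it suffices to prove $b_j \ge 0$ for $j \le -(\ell-1)/2$. For such $j$ (note $\ell \ge 3$ forces $j \le -1$), every index $j - k\ell$ appearing in the unfolded sum is at most $-1$, hence lies in the non-decreasing range of the coefficient sequence; therefore each increment $a_{j-k\ell} - a_{j-k\ell-1}$ is $\ge 0$, the bottom term being $a_{-N} - 0 = a_N \ge 0$. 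Thus $b_j \ge 0$, and palindromic symmetry propagates this to all $j$. This is the crux of the argument and the step I expect to be most delicate: the recursion advances in steps of $\ell$, so one must guarantee that no index in the progression crosses the peak at $0$ into the decreasing region, which is precisely what the restriction to the lower half secures.

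Finally, for the strict statement I would track which increments are strictly positive. Under strict unimodality the inequalities $a_{-N} < \cdots < a_0$ are strict, and since $w^N$ is genuinely the top term we have $a_N = a_{-N} > 0$; consequently, for each $j \le -(\ell-1)/2$ the leading increment $a_j - a_{j-1}$ (or, in the single-term case $j=-N$, the value $a_{-N}$) is strictly positive while all remaining increments are $\ge 0$. Hence $b_j > 0$ for $j$ in the lower half, and symmetry again gives $b_j > 0$ throughout the support, completing the proof.
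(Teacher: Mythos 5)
The paper never proves this lemma: it is imported verbatim as Lemma 3.1 of \cite{BGRT} and used as a black box, so there is no internal proof to compare against, and your argument must be judged on its own merits. It holds up. The telescoping identity $(w-1)f(w)=(w^{\ell}-1)g(w)$, the resulting recursion $b_j=b_{j-\ell}+(a_j-a_{j-1})$, the unfolding down to the bottom of the support of $g$ (supported on $[-N,\,N-\ell+1]$, as you say), and the palindromy $b_j=b_{-j-\ell+1}$ --- valid because $\Phi_\ell(1/w)=w^{-(\ell-1)}\Phi_\ell(w)$ for prime $\ell$ --- which reduces everything to indices $j\le-(\ell-1)/2$, where every increment in the unfolded sum lies in the non-decreasing half of the coefficient sequence, are all correct; the boundary term $a_{-N}$ and the strict case are handled properly as well. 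Two remarks. First, your parenthetical assumption that the coefficients are non-negative is not cosmetic: without it the statement is false (take $f(w)=-w^{-(\ell-1)/2}\Phi_\ell(w)$, which is symmetric, weakly unimodal, and divisible by $\Phi_\ell(w)$, with quotient $-w^{-(\ell-1)/2}$), so non-negativity must be built into the definition of ``unimodal'' exactly as it is in \cite{BGRT}; flagging this convention explicitly, as you do, is the right thing to do. Second, your method is arguably more informative than the bare statement: it yields an explicit formula for each coefficient of $f(w)/\Phi_\ell(w)$ as a finite sum of consecutive-coefficient increments taken along an arithmetic progression of step $\ell$, from which both the weak and strict conclusions drop out at once, and it is elementary and self-contained, independent of the block-decomposition style of argument (writing a symmetric unimodal polynomial as a non-negative combination of the polynomials $w^{-k}+\cdots+w^{k}$) that is the other standard route to results of this type.
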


\section{Divisibility of (c)rank polynomials}
In the following three subsections, we give three examples of combinatorial objects that are well-studied in the literature. We establish divisibility properties of their two-variable generating functions, and establish new proofs of Ramanujan-type congruences. We also offer many related open questions of interest.

\subsection{The spt-crank}
In \cite{And}, Andrews introduced the function $\spt(n)$, which counts the number of smallest parts among the integer partitions of $n$.  For example,  the smallest parts among the partitions of three are underlined here: 
$$\underline{3}, 2+\underline{1}, \underline{1}+\underline{1}+\underline{1}$$ and hence we see that $\spt(3) = 5.$   Among the many interesting properties of $\spt$ now known to be true,
Andrews \cite{And} proved that the $\spt$ function satisfies three beautiful Ramanujan-type congruences \begin{align}\label{eqn: spt congruences}
\spt(5n+4) \equiv 0 \pmod{5}, \qquad \spt(7n+5) \equiv 0 \pmod{7}, \qquad \spt(13n+6) \equiv 0 \pmod{13},
\end{align} for $n\in \mathbb N_0$, 
 in analogy to the celebrated Ramanujan partition congruences for the partition function $p(n)$  modulo $5, 7,$ and $11$ in \eqref{eqn_ramcong}.  As the partition crank function was famously found to combinatorially explain Ramanujan's partition congruences (see Section \ref{sec_intro}),  it is natural   to search for an $\spt$-crank function that combinatorially explains Andrews's $\spt$-congruences above.   To this end, Andrews, Garvan and Liang \cite{AGL} defined an $\spt$-crank which explains Andrews' $\spt$-congruences modulo 5 and 7  in \eqref{eqn: spt congruences}.  
In order to prove their results, Andrews, Garvan, and Liang first introduced the set of vector partitions $V$, defined by the Cartesian product
\begin{align*}
	V = \mathcal{D} \times \mathcal{P} \times \mathcal{P},
\end{align*}
where $\mathcal{D}$ is the set of partitions into distinct parts and $\mathcal{P}$ is the set of partitions. Each vector partition $\overset{\rightarrow}{\pi} \in V$ comes equipped with a crank, defined by 
\begin{align*}
	\operatorname{crank}(\overset{\rightarrow}{\pi} ) = \#(\pi_2) -\#(\pi_3). 
\end{align*}
For a partition $\lambda$, let $s(\lambda)$ denote the smallest part, and define $s(-)=\infty$ for the empty partition. Then a central subset of $V$ in \cite{AGL} is given by
\begin{align*}
	S \coloneqq \{ \overset{\rightarrow}{\pi} = (\pi_1,\pi_2,\pi_3)  \in V \colon 1\leq s(\pi_1) < \infty \text{ and } s(\pi_1) \leq \min(s(\pi_2),s(\pi_3)) \}.
\end{align*}

For $\overset{\rightarrow}{\pi} \in S$, its weight is defined by $\omega_1(\overset{\rightarrow}{\pi}) \coloneqq (-1)^{\#(\pi_1) -1}$. Then the number of vector partitions of $n$ in $S$ with crank equal to $m$ counted in accordance with the weight $\omega_1$ is denoted by 
\begin{align*}
	N_S(m,n) \coloneqq \sum_{\substack{\overset{\rightarrow}{\pi} \in S, \lvert \overset{\rightarrow}{\pi} \rvert = n \\ \operatorname{crank}(\overset{\rightarrow}{\pi} )=m }} \omega_1(\overset{\rightarrow}{\pi}) .
\end{align*} 
Importantly, it turns out that
\begin{align*}
	\sum_{m \in \Z} N_S(m,n) = \spt(n).
\end{align*}

 Before stating our results,  define the $\spt$-crank polynomials $\sptc_n(w)$ ($n\in \mathbb N_0$) to be the $q$-series coefficients of the two-variable generating function for $N_S(m,n)$.  That is (see  \cite{AGL}),
\begin{align}\label{eqn: spt crank gen fn}
		\sum_{n \geq 1} \sum_{m \in \Z} N_S(m,n) w^m q^n =:  \sum_{n \geq 1} \sptc_{n}(w) q^n = \sum_{n \geq 1} \frac{q^n (q^{n+1};q)_\infty}{(wq^n;q)_\infty (w^{-1}q^n;q)_\infty}.
	\end{align}
Our first result, Theorem \ref{thm: spt} below, explains $\spt$-congruences another way, in terms of cyclotomic polynomial divisibility properties.  This leads to a new proof of Andrews' $\spt$-congruences modulo 5 and 7;  see Corollary \ref{cor: spt57}, its proof, and Remark \ref{rmk: corspt57}.
\begin{thm}\label{thm: spt}
We have that $\Phi_5(w)$ divides $\sptc_{5n+4}(w)$ and  $\Phi_7(w)$ divides $\sptc_{7n+5}(w) $ in $\mathbb Z[w,w^{-1}]$.

Moreover, the coefficients of both $\frac{ \sptc_{5n+4}(w)}{\Phi_5(w)}$ and $\frac{\sptc_{7n+5}(w)}{\Phi_7(w)}$ are non-negative.
\end{thm}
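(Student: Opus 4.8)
The plan is to derive the divisibility from the equidistribution criterion of Lemma \ref{Lem: equidist} and the non-negativity from Lemma \ref{Lem: pos coeffs}. For the divisibility, note first that $\sptc_n(w)\in\Z[w,w^{-1}]$ and that it is symmetric, $\sptc_n(w)=\sptc_n(w^{-1})$, because the right-hand side of \eqref{eqn: spt crank gen fn} is invariant under $w\mapsto w^{-1}$; equivalently $N_S(m,n)=N_S(-m,n)$. Taking $\ell=5$ and $f=\sptc_{5n+4}$ in Lemma \ref{Lem: equidist}, divisibility $\Phi_5(w)\mid\sptc_{5n+4}(w)$ in $\Q[w,w^{-1}]$ is equivalent to $\widehat{(\sptc_{5n+4})}_{a,5}=\sum_{m\equiv a\pmod 5}N_S(m,5n+4)$ being independent of $a$, and likewise for $\ell=7$ and $f=\sptc_{7n+5}$. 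This equidistribution is exactly the combinatorial meaning of the Andrews--Garvan--Liang spt-crank: their refinement of Andrews' congruences partitions the $\spt(5n+4)$ objects into five classes of equal size $\spt(5n+4)/5$, and the $\spt(7n+5)$ objects into seven classes of equal size $\spt(7n+5)/7$. Lemma \ref{Lem: equidist} then yields divisibility in $\Q[w,w^{-1}]$, and the upgrade to $\Z[w,w^{-1}]$ is automatic: clearing $\sptc_{5n+4}(w)$ (resp. $\sptc_{7n+5}(w)$) into $\Z[w]$ by a power of $w$ and dividing by the monic integral polynomial $\Phi_\ell$ via the division algorithm keeps the quotient in $\Z[w]$, hence in $\Z[w,w^{-1}]$. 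A more self-contained but longer route, which I would hold in reserve, is to verify the equivalent vanishing $\sptc_{5n+4}(\zeta_5)=0$ and $\sptc_{7n+5}(\zeta_7)=0$ directly, by specializing $w$ to a primitive $\ell$th root of unity in \eqref{eqn: spt crank gen fn} and isolating the relevant progression of $q$-coefficients through a root-of-unity dissection of the resulting infinite product.

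For the non-negativity I would apply Lemma \ref{Lem: pos coeffs} with the odd primes $\ell\in\{5,7\}$ to $f(w)=\sptc_{5n+4}(w)$ and $f(w)=\sptc_{7n+5}(w)$, whose hypotheses require $f$ to be a symmetric, unimodal Laurent polynomial divisible by $\Phi_\ell$. Divisibility and symmetry are already established, and the coefficients $N_S(m,n)$ are known to be non-negative, so the one remaining input is unimodality of the sequence $(N_S(m,n))_{m\in\Z}$, i.e. that it increases weakly up to $m=0$ and decreases weakly afterwards. With this in hand, Lemma \ref{Lem: pos coeffs} immediately gives the non-negativity of both $\sptc_{5n+4}(w)/\Phi_5(w)$ and $\sptc_{7n+5}(w)/\Phi_7(w)$.

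The step I expect to be the main obstacle is exactly this unimodality, which I would import from the known distributional results for the spt-crank. Positivity of $N_S(m,n)$ by itself is insufficient, and one must match the available monotonicity carefully against the precise hypothesis of Lemma \ref{Lem: pos coeffs}: what is needed is consecutive-coefficient unimodality $N_S(m,n)\ge N_S(m+1,n)$ for $m\ge 0$, whereas some results in the literature are stated as the weaker spacing-two monotonicity $N_S(m,n)\ge N_S(m+2,n)$, and the latter together with symmetry does not by itself imply ordinary unimodality. I would therefore fix the exact form of the monotonicity statement being used and, if only the spacing-two version is available, supplement it with the comparison of adjacent coefficients (for instance $N_S(0,n)\ge N_S(1,n)$) needed to close the gap, after which the theorem follows.
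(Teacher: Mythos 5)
Your proposal follows essentially the same route as the paper's proof: divisibility comes from Lemma \ref{Lem: equidist} combined with the Andrews--Garvan--Liang equidistribution of the spt-crank modulo $5$ and $7$, and non-negativity comes from Lemma \ref{Lem: pos coeffs} once symmetry and unimodality of $N_S(m,n)$ in $m$ are established, which the paper supplies by citing Chen--Ji--Zang \cite{CJZ} --- precisely the consecutive-coefficient unimodality you correctly identified as the one remaining input. Your additional remarks (the upgrade from $\Q[w,w^{-1}]$ to $\Z[w,w^{-1}]$ via division by the monic $\Phi_\ell$, and the caution about spacing-two versus ordinary unimodality) are sound refinements but do not change the argument.
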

\begin{cor}\label{cor: spt57}  For $n\in \mathbb N_0$, we have that 
\begin{align*}
\spt(5n+4) \equiv 0 \pmod{5}, \qquad \spt(7n+5) \equiv 0 \pmod{7}.
\end{align*}  
\end{cor}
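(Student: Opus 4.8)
The plan is to deduce the corollary directly from Theorem \ref{thm: spt} using the equidistribution criterion of Lemma \ref{Lem: equidist}. The crucial observation is that evaluating the $\spt$-crank polynomial at $w=1$ recovers the $\spt$-function: since each $\sptc_n(w) = \sum_{m \in \Z} N_S(m,n) w^m$ is a genuine Laurent polynomial (only finitely many vector partitions of $n$ exist), and since $\sum_{m \in \Z} N_S(m,n) = \spt(n)$ as recorded just before \eqref{eqn: spt crank gen fn}, we have $\sptc_n(1) = \spt(n)$. Thus $\spt(5n+4)$ is exactly the sum of all coefficients of $\sptc_{5n+4}(w)$, and likewise $\spt(7n+5)$ for $\sptc_{7n+5}(w)$.

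First I would fix a prime $\ell \in \{5,7\}$ and let $r_\ell$ be the relevant residue ($r_5 = 4$, $r_7 = 5$), so that Theorem \ref{thm: spt} asserts $\Phi_\ell(w) \mid \sptc_{\ell n + r_\ell}(w)$ in $\Z[w,w^{-1}]$, hence a fortiori in $\Q[w^{-1},w]$. Setting $f(w) := \sptc_{\ell n + r_\ell}(w)$ and applying Lemma \ref{Lem: equidist} then gives $\widehat{f}_{a,\ell} = \widehat{f}_{b,\ell}$ for all $a,b$; that is, the coefficient-sum of $f$ over each residue class modulo $\ell$ is a single common value, which I will call $C_\ell$. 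Next I would invoke integrality: because the $N_S(m,n)$ are signed counts of vector partitions, they are integers, so each $\widehat{f}_{r,\ell} = \sum_{j \equiv r \pmod{\ell}} N_S(j,\ell n + r_\ell)$ is an integer, and in particular $C_\ell \in \Z$. Summing over the $\ell$ residue classes and using the evaluation at $w=1$,
\[
\spt(\ell n + r_\ell) = \sptc_{\ell n + r_\ell}(1) = \sum_{r=0}^{\ell-1} \widehat{f}_{r,\ell} = \ell\, C_\ell,
\]
which yields $\spt(5n+4) \equiv 0 \pmod{5}$ and $\spt(7n+5) \equiv 0 \pmod{7}$ upon taking $\ell = 5$ and $\ell = 7$.

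The substantive content is carried entirely by Theorem \ref{thm: spt}, so no genuine difficulty remains at this stage; the argument is a short extraction of a congruence from a cyclotomic divisibility. The only point demanding care is the passage from the equidistribution of the coefficient-sums — which the statement of Lemma \ref{Lem: equidist} places over $\Q$ — to an honest integer congruence. This is precisely where integrality of the $N_S(m,n)$ is needed: it guarantees that the common residue-class sum $C_\ell$ is itself an integer, so that $\spt(\ell n + r_\ell) = \ell\, C_\ell$ is a true divisibility in $\Z$ rather than merely an equality of rational averages.
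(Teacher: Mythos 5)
Your proof is correct, but it takes a genuinely different route from the paper's. The paper argues by direct specialization: Theorem \ref{thm: spt} gives a factorization $\sptc_{\ell n + r}(w) = g(w)\,\Phi_\ell(w)$ with $g \in \Z[w,w^{-1}]$, and setting $w=1$ immediately yields $\spt(\ell n + r) = g(1)\,\Phi_\ell(1) = \ell\, g(1)$ with $g(1) \in \Z$; no appeal to Lemma \ref{Lem: equidist} is needed, and the integrality that makes this an honest congruence is supplied by the $\Z[w,w^{-1}]$ divisibility rather than by the integrality of the coefficients $N_S(m,n)$. You instead pass back through Lemma \ref{Lem: equidist} to deduce that the residue-class coefficient sums $\widehat{f}_{r,\ell}$ share a common value $C_\ell$, argue $C_\ell \in \Z$ from the fact that the $N_S(m,n)$ are integers, and sum over the $\ell$ classes. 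This is precisely the ``equidistribution'' route that the paper acknowledges in Remark \ref{rmk: corspt57} but deliberately avoids ``for simplicity.'' Note also that since Theorem \ref{thm: spt} was itself proved by feeding the known equidistribution of the $\sptc$ (from Andrews--Garvan--Liang) into Lemma \ref{Lem: equidist}, your argument amounts to a round trip: equidistribution $\Rightarrow$ divisibility $\Rightarrow$ equidistribution $\Rightarrow$ congruence. What your route buys in exchange for the extra length is more information: it exhibits the congruence as a consequence of the coefficients splitting into $\ell$ classes of equal (integer) size, which is the combinatorial refinement the paper highlights as the structure of real interest. Your care about upgrading an equality of rational coefficient sums to an integer divisibility is well placed and correctly resolved.
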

 
\begin{remark} \label{rmk: corspt57} Due to Theorem \ref{thm: spt} and Lemma \ref{Lem: equidist}, 
we in fact have an equidistribution result on the coefficients of the $\sptc_{5n+4}(w)$ and $\spt_{7n+5}(w)$ polynomials. While this implies   Corollary \ref{cor: spt57}, we establish this result more directly from Theorem \ref{thm: spt} below for simplicity.
\end{remark}

\begin{proof}[Proof of Theorem \ref{thm: spt}]
The divisibility of the relevant polynomials follows from \eqref{eqn: spt crank gen fn} and Lemma \ref{Lem: equidist},   using the fact that the $\sptc$ is equidistributed in these cases (see \cite{AGL}). To see that the coefficients of each of $\frac{ \sptc_{5n+4}(w)}{\Phi_5(w)}$ and $\frac{\sptc_{7n+5}(w)}{\Phi_7(w)}$ are non-negative it is enough to note that $N_S(m,n)$ is symmetric and unimodal in $m$ for each fixed $n$ by results of Chen--Ji--Zang \cite{CJZ}. Applying Lemma \ref{Lem: pos coeffs} finishes the proof.
\end{proof}

\begin{proof}[Proof of Corollary \ref{cor: spt57}]
By Theorem \ref{thm: spt} with $w=1$, we have that $\Phi_5(1)$ divides $\sptc_{5n+4}(1)$ and $\Phi_7(1)$ divides $\sptc_{7n+5}(1)$.  By  
 \eqref{eqn: spt crank gen fn}, we have that for $n\in \mathbb N_0$, $\sptc_n(1) = \spt(n)$, and by definition, we have that $\Phi_5(1) = 5$ and $\Phi_7(1) = 7$.  The corollary now follows.
\end{proof}

 Of course, since Lemma \ref{Lem: equidist} is an if-and-only-if statement, one could also prove the first part of the result in the reverse direction, by showing that the relevant cyclotomic polynomial divides the generating function on the given arithmetic progression. 
 
 \begin{question}\label{Q sptc}
 	Is there a simple way to show that the relevant cyclotomic polynomial divides the $\sptc$ on the given arithmetic progressions in Theorem \ref{thm: spt} without using Lemma \ref{Lem: equidist} and unimodality?
 \end{question}

\begin{remark}\label{Remark: sptc}
	In \cite[Theorem 4.1]{AGL}, Andrews, Garvan, and Liang show that the generating function for the $\sptc$ can be written (up to a multiplicative factor) as the difference between the ordinary -- unmodified -- crank and rank generating functions. 
	This may provide a starting point towards answering Question \ref{Q sptc}.

\end{remark}

While the non-negativity of the coefficients after dividing by the relevant cyclotomic polynomial is an easy consequence of Lemma \ref{Lem: pos coeffs}, the coefficients themselves remain mysterious. It would be extremely instructive to determine a combinatorial description for them.

\begin{question}\label{Q comb interp}
	Do the coefficients of $\frac{ \sptc_{5n+4}(w)}{\Phi_5(w)}$ and $\frac{\sptc_{7n+5}(w)}{\Phi_7(w)}$ have a combinatorial interpretation?
\end{question} Moreover, Theorem \ref{thm: spt} only deals with the cases of arithmetic progressions modulo $5$ and $7$, since here  the $\sptc$ explains the corresponding Ramanujan-type congruences. It is well-known that the $\sptc$ does not combinatorially explain the congruence $\spt(13n+6) \equiv 0 \pmod{13}$.
\begin{question}
Can one determine a combinatorial explanation for the $\spt$ congruence modulo $13$ -- and by establishing results as in this paper modulo $5$ and $7$?
\end{question}

Partition and smallest parts congruences modulo other primes and residue classes are of interest, including modulo the smallest primes $2$ and $3$;  see for example \cite{FKO},  or  \cite{OnoSubbarao, Radu} by Ono and Radu, which notably prove Subbarao's conjecture on the partition function modulo $2$ and $3$, or \cite{FO} on $\spt$-congruences modulo $2$ and $3$.  %

\begin{question}
	Is the $\sptc$ equidistributed on certain arithmetic progressions modulo $2$ and $3$?
\end{question}
\begin{remark}
 		While the $\spt$ function is known to be almost always even (see \cite[Theorem 1.3]{AGL2}), the possible Ramanujan-type congruence modulo $2$ may not hold with the exceptions captured by those terms given in \cite[Theorem 1.3]{AGL2}.
 \end{remark}

As noted in Section \ref{sec_intro}, it is well-known that the partition generating function is essentially a modular form.  Since the time of Hardy, Ramanujan, and Rademacher and their influential related work, mathematicians have produced a wealth of literature on modularity and partition functions -- broadly construed to also include Maass forms, mock theta functions, quantum modular forms, and other modular-type functions.  For example, in \cite{Bri}, Bringmann constructs a harmonic Maass form associated to the $\spt$-generating function, and  it is of interest to investigate  and establish   the modular properties of further $\spt$-generating functions as well as its applications. As explained in Remark \ref{Remark: sptc}, the generating function for $\sptc$ is essentially a difference between the rank and crank generating functions, thereby implying mock Jacobi properties.
 
\begin{question}
	What are the precise modular properties of the $\sptc$ generating function given in \eqref{eqn: spt crank gen fn} and can they be used to determine further congruences and asymptotic properties of the $\sptc$?
\end{question}

\subsection{The rank of overpartition pairs}
An \emph{overpartition} is a partition in which the first occurrence of a part may be overlined.  For example, there are four overpartitions of $2$, namely $1+1, \overline{1}+1, 2,$ and $\overline{2}$.  Corteel and Lovejoy helped pioneer the modern-day study of overpartitions and established several important results in \cite{CL}, noting that related combinatorial and $q$-hypergeometric results trace back to older work of MacMahon \cite{MacMahon},   Joichi-Stanton \cite{JoichiStanton}, and others. It is known that there are no Ramanujan-type congruences for overpartitions, see \cite[Theorem 1.2]{Dewar}, but for \emph{overpartition pairs}, they do exist.  To explain this, an overpartition pair of $n$ is a pair of overpartitions $(\mu,\lambda)$ where the sum of all of the parts is equal to $n$.   For example, there are 12 overpartition pairs of $n=2$ (noting that the definition allows the empty overpartition to be used for $\mu$ or $\lambda$).   Overpartition pairs have been of importance as associated to Ramanujan's ${}_1\psi_1$ summation, the $q$-Gauss identity, and other $q$-hypergeometric series \cite{LovOver}.    In \cite{BLOver}, Bringmann and Lovejoy establish the following overpartition pair congruence for  the overparition pair function $\overline{pp}(n)$ which the number of overpartition pairs of $n$ $(n\in\mathbb N_0)$:
\begin{align}\label{eqn: ppmod3}
	\overline{pp}(3n+2) \equiv 0 \pmod{3}.  
\end{align}
In the same way that the partition rank function can be used to combinatorially explain Ramanujan's partition congruences modulo 5 and 7, Bringmann and Lovejoy \cite[Theorem 1.2]{BL} use the overpartition pair rank function to explain \eqref{eqn: ppmod3} by splitting overpartition pairs into three equinumerous classes sorted by ranks.  To define this rank function, we let $\ell(\cdot)$ denote the largest part of a partition, and let  $n(\cdot)$ denote the number of parts;  their overlined counterparts count only parts which are overlined. With this, the rank of an overpartition pair $(\lambda,\mu)$  is defined by
 \begin{align*}
	\ell((\lambda,\mu)) - n(\lambda) - \overline{n}(\mu) - \chi((\lambda,\mu)),
\end{align*}
where $ \chi((\lambda,\mu))$ is defined to be $1$ if the largest part of $(\lambda,\mu)$ is non-overlined and in $\mu$, and $0$ otherwise.

Our next set of results are parallel to Theorem \ref{thm: spt} and Corollary \ref{cor: spt57}.  To state them, we 
 define the overpartition pair rank polynomials $\ork_n(w)$ ($n\in \mathbb N_0$) to be the $q$-series coefficients of the two-variable generating function for   $\overline{NN}(m,n)$, the number of overpartition pairs of $n$ with rank $m$.  Explicitly  \cite[Proposition 2.1]{BL}, we have that
\begin{align}\label{eqn: ork gen fn}
	 \sum_{\substack{n\geq0 \\ m \in \Z}} \overline{NN}(m,n) w^m q^n =: \sum_{n\geq0} \ork_{n}(w) q^n	 = \sum_{n \geq 0} \frac{(-1;q)_n^2 q^n }{(wq;q)_n(w^{-1}q;q)_n}.
\end{align}
 
Our first result on overpartition pair rank polynomials is the following.
\begin{thm}\label{thm: ork3}
	We have that $\Phi_3(w)$ divides $\ork_{3n+2}(w)$ in $\Z[w,w^{-1}]$.
\end{thm}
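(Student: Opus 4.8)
The plan is to mirror the strategy used for Theorem \ref{thm: spt}, invoking Lemma \ref{Lem: equidist} to convert the divisibility statement into an equidistribution statement about the coefficients of $\ork_{3n+2}(w)$. Since $3$ is prime, Lemma \ref{Lem: equidist} tells us that $\Phi_3(w)$ divides $\ork_{3n+2}(w)$ in $\Q[w,w^{-1}]$ if and only if $\widehat{(\ork_{3n+2})}_{a,3}$ is independent of $a$; that is, if and only if the overpartition pairs of $3n+2$ split into three equinumerous classes according to the residue of their rank modulo $3$. First I would recall that this equidistribution is exactly the content of the combinatorial explanation of the congruence \eqref{eqn: ppmod3} established by Bringmann and Lovejoy in \cite[Theorem 1.2]{BL}: they prove that the overpartition pair rank sorts the overpartition pairs of $3n+2$ into three classes of equal size when reduced modulo $3$. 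Translating their statement into the notation $\overline{NN}(m,n)$, this says precisely that $\sum_{m \equiv 0} \overline{NN}(m,3n+2) = \sum_{m \equiv 1} \overline{NN}(m,3n+2) = \sum_{m \equiv 2} \overline{NN}(m,3n+2)$, which is the hypothesis of Lemma \ref{Lem: equidist}.

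The key steps in order are therefore: (i) cite the equidistribution of the overpartition pair rank modulo $3$ on the progression $3n+2$ from \cite[Theorem 1.2]{BL}, rephrased as equality of the three quantities $\widehat{(\ork_{3n+2})}_{a,3}$ for $a \in \{0,1,2\}$; (ii) apply Lemma \ref{Lem: equidist} with $\ell = 3$ and $f(w) = \ork_{3n+2}(w)$ to deduce divisibility by $\Phi_3(w)$ in $\Q[w,w^{-1}]$; and (iii) upgrade the coefficient ring from $\Q[w,w^{-1}]$ to $\Z[w,w^{-1}]$. For step (iii), I would note that $\ork_{3n+2}(w) \in \Z[w,w^{-1}]$ with integer coefficients (as the $\overline{NN}(m,n)$ are integers) and that $\Phi_3(w)$ is monic with integer coefficients, so the quotient, computed by polynomial division, necessarily lies in $\Z[w,w^{-1}]$ once divisibility holds over $\Q$.

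I anticipate the main obstacle to be a bookkeeping matter rather than a conceptual one, namely the careful matching of conventions between \cite{BL} and the present normalization of $\ork_n(w)$. Specifically, I would need to verify that the equidistribution statement in \cite[Theorem 1.2]{BL} is stated (or can be unwound) precisely for the rank residues modulo $3$ on the arithmetic progression $3n+2$, and that the generating function \eqref{eqn: ork gen fn} assembling the $\overline{NN}(m,n)$ agrees with the rank-counting function appearing there. Once the combinatorial equidistribution is secured, the application of Lemma \ref{Lem: equidist} is immediate, and the integrality in step (iii) is routine since $\Phi_3(w) = w^2 + w + 1$ is monic. A subtlety worth flagging is that Lemma \ref{Lem: equidist} as stated yields divisibility over $\Q[w,w^{-1}]$, so the claim over $\Z[w,w^{-1}]$ requires the short additional integrality argument above; I would include that explicitly to close the gap.
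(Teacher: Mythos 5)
Your proposal is correct and takes essentially the same route as the paper: both proofs reduce the divisibility claim to the equidistribution of the overpartition pair rank modulo $3$ on the progression $3n+2$, cite \cite[Theorem 1.2]{BL} for that equidistribution, and conclude via Lemma \ref{Lem: equidist}. Your extra step upgrading the divisibility from $\Q[w,w^{-1}]$ to $\Z[w,w^{-1}]$ using the monicity and integrality of $\Phi_3(w)$ is a detail the paper leaves implicit, but it does not constitute a different approach.
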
 

\begin{proof}
	By Lemma \ref{Lem: equidist} it is enough to have that the $\ork$ is equidistributed on the arithmetic progression $3n+2$. This is shown by Bringmann and Lovejoy \cite[Theorem 1.2]{BL} who showed that the overpartition pair rank splits overpartition pairs into three equinumerous classes.
\end{proof}

As a corollary, we obtain a new proof of the overpartition rank congruence modulo 3 of Bringmann and Lovejoy discussed above.
\begin{cor}\label{cor: pp3}  For $n\in \mathbb N_0$, we have that 
\begin{align*}
\overline{pp}(3n+2) \equiv 0 \pmod{3}.
\end{align*}  
\end{cor}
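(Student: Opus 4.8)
The plan is to mirror exactly the structure of the proof of Corollary \ref{cor: spt57}, which derived the $\spt$-congruences by specializing the cyclotomic divisibility at $w=1$. The key observation is that evaluating a generating function at $w=1$ collapses the two-variable rank generating function back to the ordinary counting function, while simultaneously turning the cyclotomic polynomial $\Phi_3(w)$ into the integer $\Phi_3(1)=3$. First I would invoke Theorem \ref{thm: ork3}, which asserts that $\Phi_3(w)$ divides $\ork_{3n+2}(w)$ in $\Z[w,w^{-1}]$; by the definition of divisibility in $\Q[w^{-1},w]$ recalled before Lemma \ref{Lem: equidist}, this means there exists $g(w)\in\Q[w^{-1},w]$ with $\ork_{3n+2}(w)=g(w)\Phi_3(w)$.

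Next I would specialize this identity at $w=1$. Since $\Phi_3(w)=w^2+w+1$, we have $\Phi_3(1)=3$, so the relation becomes $\ork_{3n+2}(1)=3\,g(1)$. The one point requiring a word of care is that $g(w)$ a priori lies in $\Q[w^{-1},w]$ rather than $\Z[w^{-1},w]$; however, because Theorem \ref{thm: ork3} gives divisibility in $\Z[w,w^{-1}]$, the quotient $g(w)$ in fact has integer coefficients, so $g(1)\in\Z$ and the congruence $\ork_{3n+2}(1)\equiv 0\pmod 3$ follows immediately.

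It then remains to identify $\ork_{3n+2}(1)$ with $\overline{pp}(3n+2)$. Setting $w=1$ in the two-variable generating function \eqref{eqn: ork gen fn}, the coefficient of $q^n$ becomes $\sum_{m\in\Z}\overline{NN}(m,n)$, which by definition counts all overpartition pairs of $n$ regardless of rank, i.e.\ equals $\overline{pp}(n)$. Hence $\ork_n(1)=\overline{pp}(n)$ for every $n\in\mathbb N_0$, and in particular $\ork_{3n+2}(1)=\overline{pp}(3n+2)$. Combining this with the previous paragraph yields $\overline{pp}(3n+2)\equiv 0\pmod 3$, completing the proof.

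I do not anticipate a genuine obstacle here, as the argument is entirely parallel to Corollary \ref{cor: spt57}; the only subtlety worth flagging is the integrality of the quotient $g(w)$, which is exactly why Theorem \ref{thm: ork3} is phrased over $\Z[w,w^{-1}]$ rather than merely $\Q[w^{-1},w]$. Everything else is a direct specialization.
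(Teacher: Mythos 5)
Your proposal is correct and matches the paper's own proof of Corollary \ref{cor: pp3}: specialize the divisibility of Theorem \ref{thm: ork3} at $w=1$, use $\Phi_3(1)=3$, and identify $\ork_n(1)=\overline{pp}(n)$ from \eqref{eqn: ork gen fn}. Your added remark on the integrality of the quotient $g(w)$ is a fine point the paper leaves implicit, but it does not change the argument.
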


\begin{remark} \label{rmk: corpp3} The contents of Remark \ref{rmk: corspt57} also apply here to $\operatorname{o-rank}_{3n+2}(w)$ and $\overline{pp}(3n+2)\pmod{3}$ in a similar manner.  \end{remark}

\begin{proof}[Proof of Corollary \ref{cor: pp3}]
By Theorem \ref{thm: ork3} with $w=1$, we have that $\Phi_3(1)$ divides $\ork_{3n+2}(1)$.  By  
 \eqref{eqn: ork gen fn}, we have that for $n\in\mathbb N_0$, $\ork_n(1) = \overline{pp}(n)$, and by definition, we have that $\Phi_3(1) = 3$.  The corollary now follows.
\end{proof}

Similar to the case of the $\sptc$, the (non-)unimodality of $\overline{NN}(m,n)$ in the $m$-aspect appears to be unknown in the literature. It is clear that $\overline{NN}(-m,n) = \overline{NN}(m,n)$, and numerical tests (using SageMath \cite{sage}) suggest that $\overline{NN}(m,n)$ is not unimodal in $m$, although this may be a situation that parallels the ordinary partition rank studied in \cite{BGRT}, where the authors introduced a slightly modified definition to ensure unimodality.

\begin{question}
	What are the (non-)unimodality properties of $\overline{NN}(m,n)$ in the $m$-aspect?
\end{question}

A choice of crank to explain a particular partition congruence is not unique. For example, very recently Wagner \cite{Wag} described a vast array of cranks for various partition-theoretic congruence families. In particular, a new crank statistic that explains the congruence $\overline{pp}(3n+2)\equiv 0\pmod{3}$ was introduced on p24 of \cite{Wag}, given by the pleasing infinite product formula $$\overline{C}_2(w;q) \coloneqq \prod_{n \geq 1} \frac{(1+w q^n)(1+w^{-1}q^n)}{(1-wq^n)(1+w^{-1}q^n)}.$$ One then obtains the analogues of Theorem \ref{thm: ork3} and Corollary \ref{cor: pp3} for this new crank. Moreover, this new crank numerically appears to unimodal, in contrast to the crank of Bringmann--Lovejoy. Unimodality would then lead to the non-negativity of coefficients of this new crank polynomial divided by $\Phi_{3n+2}(w)$.  One could ask for a combinatorial interpretation in analogy to Question \ref{Q comb interp}.
	
	\begin{question}
		What are the unimodality properties of $\overline{C}_2(w;q)$? Can one extend the ideas presented here to the wide class of crank functions given by Wagner in \cite{Wag}?
	\end{question}

\subsection{$t$-core partitions}\label{Sec: t-core}
Each partition comes equipped with a statistic called the \emph{hook length} of the partition. To explain this, we recall that every partition $\lambda = (\lambda_1,\lambda_2,\dots,\lambda_s)$ has a {\it Ferrers--Young diagram}
$$
\begin{matrix}
	\bullet & \bullet & \bullet & \dots & \bullet & \leftarrow & \lambda_1 \text{ many nodes}\\
	\bullet & \bullet & \dots & \bullet & & \leftarrow & \lambda_2 \text{ many nodes}\\
	\vdots & \vdots & \vdots & &  &  \\
	\bullet & \dots & \bullet & & & \leftarrow & \lambda_m \text{ many nodes},
\end{matrix}
$$
and each node has an associated {\it hook length}. The node in row $k$ and column $j$ has hook length given by
$h(k,j):=(\lambda_k-k)+(\lambda'_j-j)+1,$ where $\lambda'_j$ is the number of nodes in column $j$. This counts the number of nodes in the diagram directly below the given node plus the number to the right plus one (to count the given node itself). 
These numbers play many significant roles in combinatorics, number theory, and representation theory. For example, due to the Frame--Robinson--Thrall hook length formula \cite[6.1.19]{JK} we know that the counts of hook lengths of partitions control the number of irreducible representations of the symmetric groups $A_n$ and $S_n$.

For positive integers $t$, $t$-core partitions have also been of importance in combinatorial number theory.  A $t$-core partition of a positive integer $n$ is a partition of $n$ for which none of its hook lengths are divisible by $t$.  Let $c_t(n)$ denote the number of $t$-core partitions of $n$.  Granville and Ono \cite{GranvilleOno} showed that there are infinite families of congruences that $5$-core, $7$-core, and $11$-core partitions satisfy, including the Ramanujan-type congruences $(n\in \mathbb N_0)$
\begin{align}\label{eqn: tcorecong}
	c_5(5n+4)\equiv 0 \pmod{5}, \qquad c_7(7n+5) \equiv 0 \pmod{7}, \qquad c_{11}(11n+6) \equiv 0 \pmod{11}.
\end{align}

In order to combinatorially explain such congruences, Garvan, Kim, and Stanton \cite{GKS} introduced the $t$-core crank. To define it, we recall Bijection 2 of \cite{GKS}, which states that there is a bijection $\phi_2 \colon P_{\text{t-core}} \to \{ \overset{\rightarrow}{n} = (n_0,n_1,\dots,n_{t-1}) \colon n_i \in \Z, n_0 + n_1+\dots+n_{t-1}=0\}$, where
\begin{align*}
	\lvert \tilde{\lambda} \rvert = \frac{t \lvert\lvert  \overset{\rightarrow}{n}\rvert\rvert^2}{2} + \overset{\rightarrow}{b} \cdot \overset{\rightarrow}{n}, \qquad \overset{\rightarrow}{b} \coloneqq (0,1,\dots,t-1).
\end{align*}

Then the $t$-core crank definition is given algorithmically for a given partition $\lambda$. Choosing $t=5,7,11$ one begins by finding the $t$-core $\tilde{\lambda}$ of $\lambda$. Next, find $\phi_2(\tilde{\lambda}) = \overset{\rightarrow}{n}$. Then the $t$-core crank is given by the following mod $t$ combination
	\begin{align*}
		 4n_0 +n_1+n_3+4n_4 &\qquad \text{ for } t=5,\\
		 4n_0 +2n_1 + n_2+n_4+2n_5+4n_6  &\qquad \text{ for } t=7,\\
		4n_0 +9n_1 + 5n_2+3n_3 + n_4+n_6+3n_7+5n_8+9n_9+4n_{10}  &\qquad \text{ for } t=11.
	\end{align*}

Let $c_t(m,n)$ denote the number of $t$-core partitions of $n$ with $t$-core crank $m$. Then the $t$-core crank polynomials $\tcrk_n(w)$ are defined for $n \in \mathbb N_0$ by
\begin{align} \label{eqn: tcrk gen fn}
\sum_{\substack{n\geq0 \\ m \in \Z}} c_t(m,n) w^m q^n \eqqcolon  \sum_{n \geq 0} \tcrk_n(w) q^n.
\end{align}

Parallel to Theorem \ref{thm: spt}, but for all three moduli $5, 7$ and $11$, we establish the following result.
\begin{thm}\label{thm: tcore}
	We have that $\Phi_5(w)$ divides $\tcrk_{5n+4}(w)$,  $\Phi_{7} (w)$ divides $\tcrk_{7n+5}(w)$, and $\Phi_{11} (w)$ divides $\tcrk_{11n+6}(w)$ in $\Z[w,w^{-1}]$.
\end{thm}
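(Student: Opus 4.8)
The plan is to mirror exactly the strategy used for Theorem \ref{thm: spt} and Theorem \ref{thm: ork3}: invoke Lemma \ref{Lem: equidist}, which reduces divisibility by $\Phi_\ell(w)$ (for $\ell$ prime) to the equidistribution statement $\widehat{(\tcrk_{\ell n + \beta_\ell})}_{a,\ell} = \widehat{(\tcrk_{\ell n + \beta_\ell})}_{b,\ell}$ for all residues $a,b$, where $(\ell,\beta_\ell) \in \{(5,4),(7,5),(11,6)\}$. Since $5,7,11$ are each prime, Lemma \ref{Lem: equidist} applies verbatim in each of the three cases, and it suffices to establish that the $t$-core crank sorts the $t$-core partitions of $\ell n + \beta_\ell$ into $\ell$ equinumerous classes according to the residue of the crank modulo $\ell$.

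First I would recall that this equidistribution is precisely the combinatorial content of the work of Garvan, Kim, and Stanton \cite{GKS}, who designed the $t$-core crank (via Bijection~2 and the explicit mod-$t$ linear combinations of the $n_i$ recorded above) exactly so that it would split the $t$-core partitions of $\ell n + \beta_\ell$ into $\ell$ classes of equal size, thereby combinatorially explaining the Granville--Ono congruences \eqref{eqn: tcorecong}. In the notation of \eqref{eqn: tcrk gen fn}, this says that $c_t(a, \ell n + \beta_\ell)$, summed over all $m \equiv a \pmod{\ell}$, is independent of the residue class $a$; equivalently each class contains exactly $c_t(\ell n + \beta_\ell)/\ell$ partitions. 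Translating this count into the language of the coefficient sums $\widehat{(\tcrk_{\ell n+\beta_\ell})}_{a,\ell}$ gives exactly the hypothesis of Lemma \ref{Lem: equidist}, and the divisibility conclusion follows immediately in each of the three cases.

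Concretely, the proof would read: for $t = \ell \in \{5,7,11\}$, by \cite{GKS} the $t$-core crank is equidistributed modulo $\ell$ on the arithmetic progression $\ell n + \beta_\ell$, so $\widehat{(\tcrk_{\ell n+\beta_\ell})}_{a,\ell}$ does not depend on $a$; applying Lemma \ref{Lem: equidist} (valid since each $\ell$ is prime) yields that $\Phi_\ell(w)$ divides $\tcrk_{\ell n + \beta_\ell}(w)$. Repeating for the three values of $\ell$ proves the theorem. Note that the argument produces divisibility in $\Z[w,w^{-1}]$ rather than merely $\Q[w,w^{-1}]$ because the quotient is forced to have integer coefficients: $\tcrk_{\ell n + \beta_\ell}(w)$ has integer coefficients and $\Phi_\ell(w)$ is monic, so polynomial division over $\Z$ (after clearing the Laurent shift) stays integral.

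**The main obstacle** is essentially bookkeeping rather than mathematical depth: the whole theorem is a translation of an existing equidistribution result into cyclotomic-divisibility language, so the only real work is confirming that the equidistribution in \cite{GKS} is stated precisely on the progressions $5n+4$, $7n+5$, $11n+6$ and with respect to the crank statistics spelled out above, and that the primality hypothesis of Lemma \ref{Lem: equidist} is met (it is, for $\ell=5,7,11$). Unlike Theorem \ref{thm: spt}, no non-negativity or unimodality claim is made here, so I would not need Lemma \ref{Lem: pos coeffs}, and would not need to verify symmetry or unimodality of $c_t(m,n)$ in $m$ — this keeps the proof short and sidesteps the one genuinely nontrivial ingredient that appeared in the $spt$-crank case.
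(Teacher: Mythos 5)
Your proof is correct and follows exactly the paper's own argument: cite Garvan--Kim--Stanton \cite{GKS} for the fact that the $t$-core crank splits $c_5(5n+4)$, $c_7(7n+5)$, and $c_{11}(11n+6)$ into equinumerous classes modulo $5$, $7$, $11$ respectively, and then apply Lemma \ref{Lem: equidist}. Your added remark on why divisibility holds over $\Z[w,w^{-1}]$ (monicity of $\Phi_\ell$) is a small bonus the paper leaves implicit, but otherwise the two proofs coincide.
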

\begin{proof}
	Garvan, Kim, and Stanton \cite{GKS} showed that the $\tcrk$ splits $c_5(5n+4)$, $c_7(7n+5)$, and $c_{11}(11n+6)$ into equinumerous classes (see also \cite[Theorem 3.1]{Garvan}). Combined with Lemma \ref{Lem: equidist}, one immediately obtains the result.
\end{proof}

As a corollary, we obtain a new proof of the Ramanujan-type congruences for $c_t(n)$ modulo $5, 7$ and $11$ in \eqref{eqn: tcorecong}.  
\begin{cor}\label{cor: tcore}  For $n\in \mathbb N_0$, we have that 
\begin{align*}
c_5(5n+4)\equiv 0 \pmod{5}, \qquad c_7(7n+5) \equiv 0 \pmod{7}, \qquad c_{11}(11n+6) \equiv 0 \pmod{11}.
\end{align*}  
\end{cor}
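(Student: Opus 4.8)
The plan is to deduce the congruences directly from Theorem \ref{thm: tcore} by specializing the variable $w$ to $1$, exactly mirroring the proofs of Corollary \ref{cor: spt57} and Corollary \ref{cor: pp3}. The key observation is that divisibility of a Laurent polynomial by $\Phi_\ell(w)$ in $\Z[w,w^{-1}]$ is preserved under evaluation at any fixed value of $w$, so Theorem \ref{thm: tcore} immediately yields a numerical divisibility once we set $w=1$.

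First I would invoke Theorem \ref{thm: tcore} with $w=1$ to conclude that $\Phi_5(1)$ divides $\tcrk_{5n+4}(1)$, that $\Phi_7(1)$ divides $\tcrk_{7n+5}(1)$, and that $\Phi_{11}(1)$ divides $\tcrk_{11n+6}(1)$, all as integers. Next I would evaluate both sides of these relations. On the one hand, specializing the generating function \eqref{eqn: tcrk gen fn} at $w=1$ gives $\sum_{n\geq 0}\tcrk_n(1)q^n = \sum_{n\geq 0}\left(\sum_{m\in\Z}c_t(m,n)\right)q^n$, and since summing $c_t(m,n)$ over all $m$ recovers the total number of $t$-core partitions of $n$, we obtain $\tcrk_n(1) = c_t(n)$ for each $n\in\mathbb N_0$. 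On the other hand, for a prime $\ell$ one has $\Phi_\ell(w) = 1 + w + \cdots + w^{\ell-1}$, whence $\Phi_5(1)=5$, $\Phi_7(1)=7$, and $\Phi_{11}(1)=11$.

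Combining these two evaluations, the three divisibilities become precisely $5 \mid c_5(5n+4)$, $7 \mid c_7(7n+5)$, and $11 \mid c_{11}(11n+6)$, which is the assertion of the corollary. There is no genuine obstacle at this stage: all of the substantive work has been absorbed into Theorem \ref{thm: tcore}, which in turn rests on the equidistribution of the $t$-core crank across congruence classes established by Garvan, Kim, and Stanton. The only minor point to keep track of is that the correct value of $t$ is paired with each modulus — that divisibility by $\Phi_5$ is applied to the $5$-core crank, and similarly for $7$ and $11$ — which is immediate from the statement of the theorem.
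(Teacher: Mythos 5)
Your proposal is correct and follows essentially the same route as the paper: specialize Theorem \ref{thm: tcore} at $w=1$, use \eqref{eqn: tcrk gen fn} to identify $\tcrk_n(1) = c_t(n)$, and use $\Phi_\ell(1)=\ell$ for the primes $5,7,11$. The paper's proof is word-for-word this argument, so there is nothing to add.
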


\begin{remark} \label{rmk: cortcore} The contents of Remark \ref{rmk: corspt57} also apply here to $\operatorname{t-core-crank}_{5n+4}(w),$  $\operatorname{t-core-crank}_{7n+5}(w),$ $\operatorname{t-core-crank}_{11n+6}(w)$, and $c_5(5n+4), c_7(7n+5)$ and $c_{11}(11n+6)$ $\pmod{5, 7,11}$   (respectively) in a similar manner.  \end{remark}

\begin{proof}[Proof of Corollary \ref{cor: tcore}]
By Theorem \ref{thm: tcore} with $w=1$, we have that $\Phi_5(1)$ divides $\tcrk_{5n+4}(1)$, that  $\Phi_7(1)$ divides $\tcrk_{7n+5}(1)$, and that $\Phi_{11}(1)$ divides $\tcrk_{11n+6}(1)$.  By  
 \eqref{eqn: tcrk gen fn}, we have that for $n\in \mathbb N_0$, $\tcrk_n(1) = c_t(n)$, and by definition, we have that $\Phi_5(1) = 5$,  $\Phi_7(1) = 7$, and  $\Phi_{11}(1) = 11$.  The corollary now follows.
\end{proof}

To the best of the authors' knowledge, the (non-)unimodality properties of $c_t(m,n)$ in the $m$-aspect are unknown in the literature.

\begin{question}\label{Q8}
	What are the (non-)unimodality properties of $c_t(m,n)$ in the $m$-aspect?
\end{question}

To obtain numerical evidence for this question, it is natural to ask for a $q$-series representation for the two-variable generating function given in \eqref{eqn: tcrk gen fn}. To the best of the authors' knowledge, this is unknown in the literature and so we leave this as a question.
\begin{question}
	Is there a (nice) $q$-hypergeometric or product expression (e.g. as in \eqref{eqn: rank gen fn}, \eqref{eqn: crank gen fn}, \eqref{eqn: spt crank gen fn}, \eqref{eqn: ork gen fn}) that allows one to answer (or make progress in answering) Question \ref{Q8}?
\end{question}

We end this section by noting that there are a plethora of other partitions statistics in the literature that one could ask similar questions for. For example, spt residual cranks of overpartitions, unimodal sequences, the rank of overpartition pairs and more general partition pairs \cite{Toh}, and the many crank/rank functions of Garvan and Jennings-Shaffer \cite{GJ,GJ2,JenningsShaffer,JS,JS2}.

\section{Principal Polynomial roots}
Refined information on partition statistics can be found by inspecting their so-called principal polynomials (defined explicitly in our cases below). This story begins with Stanley, who investigated the zeros of the partition polynomials
\begin{align*}
	F_n(w)\coloneqq \sum_{k=1}^{n} p_k(n) w^n,
\end{align*}
where $p_k(n)$ denotes the number of partitions into exactly $k$ parts. Stanley plotted the zeros of $F_{200}(w)$ and asked for their limiting behaviour as $n$ tends to $\infty$. This was settled in two beautiful papers of Boyer and Goh \cite{BG1,BG}, who discussed in detail the (rather exotic) zero-attractor of $F_n(x)$ (we refer the reader to these papers for more history and background on this topic). Boyer-Goh also prove similar results for the zero-attractors of several other common objects, including Appell and Euler polynomials \cite{BG-Appell,BG-Euler}, and Boyer--Parry proved similar results for traces of plane partitions \cite{BP}. In each case, the resulting zero-attractor is rather complicated, and the proofs require technical asymptotics and bounds. 

However, in some particular cases the roots of polynomials associated to partition-theoretic objects take a very simple shape - they become equidistributed on the unit circle as $n$ grows. To state this formally, we require classical results of Erd\"{o}s and Tur\'{a}n, which were conveniently packaged together in our setting by Granville in \cite{Granville}. Let $f(w) = \sum_{j=0}^{d} a_jw^j$ and  
\begin{align*}
	L(f) \coloneqq \frac{\sum_{j=0}^{d} |a_j|}{(|a_0||a_d|)^{\frac{1}{2}}}.
\end{align*}
Let $\nu_{ \{|z|=1\} }$ be the Haar measure on the unit circle (that is, equidistribution), and for a polynomial with not necessarily distinct roots $z_1,\dots z_d$ let $\mu_{ \{f\} }= \frac{1}{d}\sum_{j=1}^{d}\delta_{z_j}$ where $\delta_z$ is the Dirac delta measure. Then Theorem 1.3 of \cite{Granville} reads as follows. 
\begin{thm}\label{Thm: Granville}
	Suppose that $f_1,f_2,\dots$ is a sequence of polynomials in $\C[x]$ where $f_d$ has degree $d$ and $f_d(0)\neq0$. If $L(f_d) = e^{o(d)}$ as $d\to\infty$ then
	\begin{align*}
		\lim_{d\to\infty} \mu_{\{ f_d(x) \} } = \nu_{ \{|z|=1\} }
	\end{align*}
	in the sense of ``weak convergence'' of measures.
\end{thm}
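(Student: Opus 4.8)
The plan is to split this two-dimensional convergence into an \emph{angular} and a \emph{radial} statement and then recombine them, exploiting the fact that the radial part collapses to a point mass at $1$. Write the zeros of $f_d$ as $z_k = r_k e^{i\theta_k}$ for $1 \le k \le d$. I would first show that the empirical measure of the angles $\theta_k$ tends to the uniform measure on $[0,2\pi)$, and separately that all but a vanishing proportion of the moduli $r_k$ lie in an arbitrarily thin annulus about $|z| = 1$. Since $\nu_{\{|z|=1\}}$ is exactly the law of $e^{i\theta}$ for $\theta$ uniform, these two facts together yield $\mu_{\{f_d\}} \to \nu_{\{|z|=1\}}$.

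For the radial concentration I would use Jensen's formula. Applying it on the circle $|z|=1$ and bounding $|f_d(e^{i\theta})| \le \sum_j |a_j|$ gives
\[
	\sum_{|z_k| < 1} \log \frac{1}{|z_k|} \;=\; \frac{1}{2\pi}\int_0^{2\pi} \log\bigl|f_d(e^{i\theta})\bigr|\, d\theta - \log|a_0| \;\le\; \log \frac{\sum_j |a_j|}{|a_0|}.
\]
Fix $\rho' \in (0,1)$. Each zero with $|z_k| \le \rho'$ contributes at least $\log(1/\rho')$ to the left-hand side, so $\#\{k : |z_k| \le \rho'\} \le \log(\sum_j|a_j|/|a_0|)/\log(1/\rho')$. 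Applying the same estimate to the reciprocal polynomial $z^d f_d(1/z)$, whose zeros are the $1/z_k$ and whose $L$-value is unchanged, bounds $\#\{k : |z_k| \ge 1/\rho'\}$ by $\log(\sum_j|a_j|/|a_d|)/\log(1/\rho')$. Adding the two bounds gives $\#\{k : |z_k| \le \rho' \text{ or } |z_k| \ge 1/\rho'\} \le 2\log L(f_d)/\log(1/\rho')$. As $\log(1/\rho')$ is a fixed constant, the hypothesis $\log L(f_d) = o(d)$ forces this count to be $o(d)$; letting $\rho' \uparrow 1$ gives the radial concentration. This also shows the family $\{\mu_{\{f_d\}}\}$ is tight, so no mass escapes to infinity and it suffices to test weak convergence against bounded continuous functions.

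For the angular equidistribution I would invoke the Erd\"os--Tur\'an inequality, which bounds the discrepancy
\[
	D_d \;:=\; \sup_{0 \le \alpha < \beta \le 2\pi} \left| \frac{1}{d}\#\{k : \alpha \le \theta_k \le \beta\} - \frac{\beta - \alpha}{2\pi} \right| \;\le\; C\sqrt{\frac{\log L(f_d)}{d}}
\]
for an absolute constant $C$. Under the hypothesis the right-hand side is $o(1)$, so the angular empirical measures converge weakly to uniform. I expect the proof of this inequality to be the main obstacle. It proceeds by combining the Fourier-analytic discrepancy bound $D_d \le C/K + C\sum_{m=1}^{K} \tfrac{1}{m}\bigl|\tfrac{1}{d}\sum_k e^{im\theta_k}\bigr|$, valid for every positive integer $K$, with estimates of the angular sums $\sum_k e^{im\theta_k}$ in terms of the coefficients $a_j$. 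The delicate point is the contribution of zeros whose modulus $r_k$ is far from $1$: Newton-type identities produce the power sums $\sum_k z_k^m = \sum_k r_k^m e^{im\theta_k}$ rather than the unimodular sums $\sum_k e^{im\theta_k}$, and it is precisely in reconciling these that the normalization by $(|a_0||a_d|)^{1/2}$ in $L$ is used, after which optimizing over $K$ produces the square-root bound.

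Finally I would recombine the two pieces. Fix $g \in C_b(\C)$ and $\varepsilon > 0$; on the compact annulus $1 - \varepsilon \le |z| \le 1 + \varepsilon$ the function $g$ is uniformly continuous, with modulus of continuity $\omega_g$. Splitting $\frac{1}{d}\sum_k g(z_k)$ according to whether $|r_k - 1| \le \varepsilon$, the radial concentration makes the ``far'' terms contribute at most $\|g\|_\infty \cdot o(1)$, while on the ``near'' terms $g(z_k) = g(e^{i\theta_k}) + O(\omega_g(\varepsilon))$. The angular equidistribution then gives $\frac{1}{d}\sum_k g(e^{i\theta_k}) \to \frac{1}{2\pi}\int_0^{2\pi} g(e^{i\theta})\, d\theta = \int g\, d\nu_{\{|z|=1\}}$, and letting $\varepsilon \to 0$ completes the argument.
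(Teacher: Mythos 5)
Your proposal is correct and follows essentially the same route as the paper's source: the paper itself offers no proof of this statement, quoting it verbatim as Theorem 1.3 of Granville's survey (which packages the classical Erd\"os--Tur\'an results), and Granville's argument is precisely your decomposition --- Jensen's formula applied to $f_d$ and to the reciprocal polynomial $z^d f_d(1/z)$ for radial concentration, the Erd\"os--Tur\'an discrepancy inequality for angular equidistribution, and a recombination against bounded continuous test functions in which the $o(d)$ zeros far from the unit circle contribute only $\|g\|_\infty \cdot o(1)$. Your radial estimate and your recombination step are complete and correct; the one component you do not prove is the Erd\"os--Tur\'an discrepancy bound $D_d \ll \sqrt{\log L(f_d)/d}$ itself, which you invoke with an admittedly speculative sketch, but since that classical inequality is exactly the input the paper (via Granville) also takes as given, this does not constitute a gap relative to the paper's treatment --- only a caveat that your argument is not self-contained without it.
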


In general, we consider two situations. First, take a partition family $s(n)$ with partition-theoretic statistic $s(m,n)$ such that $s(-m,n) = s(m,n)$ and $s(m,n) \geq 0$ for all $m \in \Z$ and $n\in \N$, with $s(0,n)\neq 0$ for all $n$. Assume that $s(0,n) + 2\sum_{m=1}^{\ell} s(m,n) = s(n)$ for some $\ell \in \N$. Assume that $s(n) \sim e^{o(\ell)}$. Consider the principal polynomial 
\begin{align*}
	S_n(w) \coloneqq \frac{s(0,n)}{2} + \sum_{1\leq m \leq \ell} s(m,n) w^m.
\end{align*}
Then using Theorem \ref{Thm: Granville} it is not difficult to show that as $n\to \infty$ the roots of $S_n(w)$ tend to equidistribution around the unit circle. Of course, the difficulty is usually in finding the asymptotic behaviour of $s(n)$. The families $s(m,n)$ often occur as ranks and cranks of various objects, which are allowed to be negative.

Similarly, we also consider a partition family $t(n)$ with partition-theoretic statistic $t(m,n)$ which vanishes for $m <0$, where $t(m,n) \geq 0$ for all $m \in \N_0$ and $n\in \N$, with $t(0,n)\neq 0$ for all $n$. Assume that $\sum_{m=0}^{\ell} t(m,n) = t(n)$ for some $\ell \in \N$. Then we consider the principal polynomial 
\begin{align}
	T_n(w) \coloneqq \sum_{0\leq m \leq \ell} t(m,n) w^m.
\end{align}
Again, using Theorem \ref{Thm: Granville} we see that as $n\to \infty$ the roots of $T_n(w)$ tend to equidistribution around the unit circle. The families $t(m,n)$ often occur from partition statistics which are naturally positive counts, for example partitions with number of parts equal to $m$.

In  Section \ref{sec_equid}, following these methods, we establish equidistribution results for some specific partition polynomials of interest. Many other partition-theoretic families can be shown to have principal polynomials whose roots tend to equidistribution on the unit circle in a similar way. We pay special attention to strongly unimodal sequence polynomials in Section \ref{sec_sus} and $t$-hook polynomials in Section \ref{sec_thooks}, for which equidistribution-type properties are less clear, and offer various computations and open questions.

\subsection{Equidistribution of roots of partition polynomials}\label{sec_equid}
\subsubsection{Rank and crank polynomials.}
In \cite{BG}, the principal polynomials of the rank and crank of ordinary partitions were each shown to have roots whose arguments tend to equidistribution on the unit circle as $n \to \infty$. The authors of \cite{BG} go on to conjecture that the zero attractor for these polynomials is the unit circle. To the best of the authors' knowledge, this is not confirmed in the literature, and so we record the result here. 

Let $N(m,n)$ and $M(m,n)$ denote the number of partitions of $n$ with rank and crank $m$ respectively. We define the principal polynomials
\begin{align*}
	N_n(w) \coloneqq \frac{N(0,n)}{2} +  \sum_{1\leq m < n} N(m,n) w^m, \qquad M_n(w) \coloneqq \frac{M(0,n)}{2} +  \sum_{1\leq m \leq n} M(m,n) w^m,
\end{align*}
noting that the factor of $\frac{1}{2}$ in the constant term of each is not present in \cite{BG}. In particular, recall that
\begin{align*}
	\sum_{-n < m < n} N(m,n) = \sum_{-n \leq m \leq n} M(m,n) = p(n).
\end{align*}
Moreover, we have that $N(0,n) \neq 0$ and $M(0,n) \neq 0$ for $n\geq 1$, along with the well-known asymptotic formula of Hardy and Ramanujan \cite{HardyRamanujan}
\begin{align*}
	p(n) \sim \frac{1}{4n\sqrt{3}}\exp\left(\pi \sqrt{\frac{2n}{3}}\right), \qquad n \to \infty.
\end{align*}
Thus the assumptions of Theorem \ref{Thm: Granville} apply, and we conclude the following theorem.
\begin{thm}\label{thm: rcequid}
		As $n \to \infty$ the roots of the rank and crank principal polynomials, i.e. $N_n(w)$ and $M_n(w)$, tend to equidistribution on the unit circle. 
\end{thm}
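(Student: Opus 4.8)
The plan is to verify directly that the hypotheses of Theorem~\ref{Thm: Granville} are satisfied by the two sequences of principal polynomials $N_n(w)$ and $M_n(w)$, and then simply invoke that theorem. The key observation is that Theorem~\ref{Thm: Granville} reduces the entire equidistribution claim to a single quantitative estimate: one must show that $L(N_n) = e^{o(\deg N_n)}$ and $L(M_n) = e^{o(\deg M_n)}$, where $L(f) = \bigl(\sum_j |a_j|\bigr)/(|a_0||a_d|)^{1/2}$. Since all coefficients $N(m,n)$ and $M(m,n)$ are non-negative, the numerator $\sum_j |a_j|$ is just the sum of the coefficients, which telescopes to a quantity comparable to $p(n)$ via the identities $\sum_{-n<m<n} N(m,n) = \sum_{-n\le m\le n} M(m,n) = p(n)$ recorded above; the symmetry $N(-m,n)=N(m,n)$, $M(-m,n)=M(m,n)$ lets me relate the one-sided sum defining the principal polynomial back to the full two-sided sum $p(n)$.

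First I would verify the structural prerequisites of Theorem~\ref{Thm: Granville}: each $N_n$ and $M_n$ is a genuine polynomial of positive degree $d$ (here $d$ grows linearly in $n$, roughly $d \approx n$), and the constant term is nonzero. The nonvanishing of the constant term is exactly the stated fact that $N(0,n)\neq 0$ and $M(0,n)\neq 0$ for $n\geq 1$, which guarantees $f_d(0)\neq 0$ and also keeps $|a_0|$ bounded away from $0$ so that $L$ is well-defined. Next I would bound $L(N_n)$ from above. The numerator is at most $p(n)$ (up to the harmless factor of $\tfrac12$ on the constant term), and the denominator is $(|a_0||a_d|)^{1/2} = (\tfrac12 N(0,n)\cdot N(m_{\max},n))^{1/2} \geq \tfrac{1}{\sqrt 2}$ since every nonzero coefficient is a positive integer, hence at least $1$. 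Therefore $L(N_n) \leq \sqrt{2}\, p(n)$, and the same bound holds for $L(M_n)$.

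It then remains to confirm the growth condition $L(f_d) = e^{o(d)}$. Using the Hardy--Ramanujan asymptotic $p(n) \sim \frac{1}{4n\sqrt 3}\exp\!\bigl(\pi\sqrt{2n/3}\bigr)$, we have $\log p(n) = O(\sqrt n)$, while $d$ is of order $n$. Hence $\log L(N_n) \leq \log\bigl(\sqrt 2\, p(n)\bigr) = O(\sqrt n) = o(n) = o(d)$, which is precisely $L(N_n) = e^{o(d)}$, and identically for $M_n$. With all hypotheses checked, Theorem~\ref{Thm: Granville} yields $\lim_{n\to\infty}\mu_{\{N_n\}} = \lim_{n\to\infty}\mu_{\{M_n\}} = \nu_{\{|z|=1\}}$ in the weak sense, which is exactly the asserted equidistribution of roots on the unit circle.

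The conceptual content of this argument is light; the only genuine point requiring care is that the denominator $(|a_0||a_d|)^{1/2}$ in $L$ does not shrink fast enough to spoil the estimate. This is where I expect the main (minor) obstacle to lie, and it is handled precisely by the integrality and positivity of the coefficients together with $N(0,n), M(0,n)\neq 0$: because each relevant coefficient is a positive integer it is bounded below by $1$, so the denominator is bounded below by a constant and cannot degrade the bound $L = e^{o(d)}$. The heavy lifting—the actual asymptotic growth rate of $p(n)$—is supplied externally by Hardy--Ramanujan and need not be reproved. This fits the general framework described before Theorem~\ref{thm: rcequid}, with $s(m,n)=N(m,n)$ and $s(m,n)=M(m,n)$ playing the role of the symmetric statistic and $s(n)=p(n)$ satisfying $s(n)\sim e^{o(\ell)}$.
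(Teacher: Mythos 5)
Your proposal is correct and follows essentially the same route as the paper: both verify the hypotheses of Theorem~\ref{Thm: Granville} for $N_n(w)$ and $M_n(w)$ using the symmetry identities $\sum_{-n<m<n}N(m,n)=\sum_{-n\le m\le n}M(m,n)=p(n)$, the nonvanishing of $N(0,n)$ and $M(0,n)$, and the Hardy--Ramanujan asymptotic giving $\log p(n)=O(\sqrt{n})=o(n)$. In fact your write-up is more detailed than the paper's (which simply asserts that the assumptions of Theorem~\ref{Thm: Granville} apply), since you explicitly bound the denominator $(|a_0||a_d|)^{1/2}$ away from zero via integrality and positivity of the coefficients.
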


\subsubsection{spt-crank polynomials.}   Recall the $\sptc$ generating function given in \eqref{eqn: spt crank gen fn}.
For fixed $n \in \N$ consider the principal polynomial attached to the $\sptc$ given by
\begin{align*}
	P_n(w) = \tfrac12 N_S(0,n)  + \sum_{1\leq m < n} N_S(m,n) w^m.
\end{align*}
We have that $P_n(0) =  \frac12 N_S(0,n)= \frac12 \operatorname{ospt(n)}$ by Section 2 of \cite{ADR}, where $\operatorname{ospt}$ counts the difference of first moments of the ordinary crank and rank distributions. Moreover, by \cite[Theorem 3]{ACK} we have that $\operatorname{ospt}(n)> 0$ for $n > 0$, and so we need only check that 
\begin{align*}
\frac{ N_S(0,n) + 2 \displaystyle \sum_{j=1}^{n-1} N_S(n-1,n)}{(2N_S(0,n)N_S(n-1,n))^{\frac{1}{2}}} = e^{o(n-1)}
\end{align*}
The numerator is equal to $\operatorname{spt}(n)$ by definition. Using the known asymptotic for $\operatorname{spt}(n)$ given in \cite{Bri}
\begin{align*}
	\operatorname{spt}(n) \sim \frac{1}{2\sqrt{2}\pi \sqrt{n}} e^{\pi \sqrt{\frac{2n}{3}}} = e^{o(n-1)},
\end{align*}
 we may again apply Theorem \ref{Thm: Granville} to conclude that the zeros of $Q_n(w)$ are equidistributed on the unit circle as $n \to \infty$. We record this as a theorem.
\begin{thm}\label{thm: sptequid}
	As $n \to \infty$ the roots of $P_n(w)$ tend to equidistribution on the unit circle. 
\end{thm}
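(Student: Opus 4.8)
The plan is to verify the three hypotheses of Theorem~\ref{Thm: Granville} for the sequence $(P_n)_{n\ge 1}$ and then invoke it directly. Writing $P_n(w)=\sum_{j=0}^{d_n}a_jw^j$, I must check that the degree $d_n\to\infty$, that $P_n(0)=a_0\neq 0$, and that $L(P_n)=e^{o(d_n)}$.

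First I would pin down the degree and the two extremal coefficients. I claim $d_n=n-1$ with leading coefficient $N_S(n-1,n)=1$. Indeed, any $\overset{\rightarrow}{\pi}=(\pi_1,\pi_2,\pi_3)\in S$ with $\lvert\overset{\rightarrow}{\pi}\rvert=n$ satisfies $\operatorname{crank}(\overset{\rightarrow}{\pi})=\#(\pi_2)-\#(\pi_3)\le\#(\pi_2)\le\lvert\pi_2\rvert\le n-\lvert\pi_1\rvert\le n-1$, since $s(\pi_1)\ge 1$ forces $\lvert\pi_1\rvert\ge 1$; equality throughout forces $\pi_3=\emptyset$, $\pi_1=(1)$, and $\pi_2=(1^{n-1})$, the unique such vector partition, whose weight is $\omega_1=1$. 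Hence $N_S(m,n)=0$ for $m\ge n$ and $N_S(n-1,n)=1$, so $d_n=n-1\to\infty$ and $a_{d_n}=1$. For the constant term, $P_n(0)=\tfrac12 N_S(0,n)=\tfrac12\operatorname{ospt}(n)$ by Section~2 of \cite{ADR}, and $\operatorname{ospt}(n)>0$ for $n>0$ by \cite[Theorem~3]{ACK}, so $a_0\neq 0$.

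The heart of the argument is the estimate on $L(P_n)$, and here I would use the symmetry $N_S(-m,n)=N_S(m,n)$ together with the non-negativity of $N_S(m,n)$ proved by Chen--Ji--Zang \cite{CJZ}. Non-negativity lets me drop absolute values, so that
\begin{align*}
\sum_{j=0}^{d_n}\lvert a_j\rvert=\tfrac12 N_S(0,n)+\sum_{m=1}^{n-1}N_S(m,n)=\tfrac12\Big(N_S(0,n)+2\sum_{m=1}^{n-1}N_S(m,n)\Big)=\tfrac12\spt(n),
\end{align*}
the final equality being $\sum_{m\in\Z}N_S(m,n)=\spt(n)$ together with symmetry. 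With $\lvert a_0\rvert=\tfrac12\operatorname{ospt}(n)$ and $\lvert a_{d_n}\rvert=1$, this yields $L(P_n)=\frac{\spt(n)/2}{(\operatorname{ospt}(n)/2)^{1/2}}$.

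Finally I would feed in the asymptotics. Bringmann's estimate $\spt(n)\sim\frac{1}{2\sqrt2\,\pi\sqrt n}e^{\pi\sqrt{2n/3}}$ from \cite{Bri} gives $\log\spt(n)=O(\sqrt n)$, while non-negativity forces $1\le\operatorname{ospt}(n)=N_S(0,n)\le\sum_{m\in\Z}N_S(m,n)=\spt(n)$, so $0\le\log\operatorname{ospt}(n)\le\log\spt(n)=O(\sqrt n)$. Hence $\log L(P_n)=O(\sqrt n)=o(n-1)=o(d_n)$, i.e. $L(P_n)=e^{o(d_n)}$, and Theorem~\ref{Thm: Granville} applies. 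I expect the only real subtleties to lie in steps two and three: the nonvanishing of the constant term depends essentially on the positivity result of \cite{ACK}, and the clean evaluation of $\sum_j\lvert a_j\rvert$ on the non-negativity of \cite{CJZ} (absent which one could still bound $\sum_m\lvert N_S(m,n)\rvert$ by the subexponentially many vector partitions of $n$, so the $e^{o(n)}$ conclusion should survive in any case). The leading-coefficient computation and the $\spt$ asymptotic are routine inputs.
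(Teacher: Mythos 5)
Your proof is correct and follows essentially the same route as the paper: verify the hypotheses of Theorem~\ref{Thm: Granville} by noting $P_n(0)=\tfrac12\operatorname{ospt}(n)>0$ (via \cite{ADR} and \cite{ACK}), evaluating $\sum_j |a_j| = \tfrac12\spt(n)$ using symmetry and the Chen--Ji--Zang non-negativity, and invoking Bringmann's asymptotic for $\spt(n)$ to get $L(P_n)=e^{o(n)}$. The one point where you go beyond the paper is your explicit combinatorial verification that $N_S(n-1,n)=1$, so that $P_n$ genuinely has degree $n-1$ and the denominator of $L(P_n)$ is bounded below; the paper tacitly assumes this, so your addition is a welcome tightening of the same argument rather than a different approach.
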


\subsubsection{Unimodal sequence polynomials.}
A  \textit{(weakly) unimodal sequence} of size $n$ is a sequence of positive integers $\{a_j\}_{1\leq j \leq s}$ such that
\begin{align}\label{eqn: unimodal}
a_1 \leq a_2 \leq \dots a_{k-1} \leq  \overline{a}_k \geq a_{k+1} \geq \dots \geq a_s,\qquad \sum_{j=1}^{s} a_j = n.
\end{align}
We follow the notation of \cite{BJM} by indicating with the overline on the peak $a_{k}$ that if the largest part is repeated, the sequences may be further distinguished by specifying the location of the peak. Unimodal sequences are ubiquitous in number theory and in wider mathematics, and we refer the reader to \cite{St} and the references therein for many beautiful examples.

The rank of a unimodal sequence is defined to be the number of parts to the right of the peak minus the number of parts before the peak. If we consider unimodal sequences of size $n$ with rank $m$, denoted by $u(m,n)$, then the generating function is given by (see e.g. \cite{BJM})
\begin{align*}
	U(w;q) \coloneqq \sum_{n\geq0}\sum_{m \in \Z} u(m,n) w^m q^n = \sum_{n \geq 0} \frac{q^n}{(w q;q)_n (w^{-1}q;q)_n}.
\end{align*}

For fixed $n \in \N$, we consider the principal part polynomial defined by
\begin{align*}
	Q_n(w) \coloneqq \tfrac12 u(0,n) + \sum_{1\leq m < n} u(m,n) w^m.
\end{align*}
Then, since $Q_n(0) =  \frac12 u(0,n)  \neq 0$, we need only check that 
\begin{align*}
		\frac{ u(0,n) + 2 \displaystyle \sum_{j=1}^{n-1} u(j,n)}{(2u(0,n)u(n-1,n))^{\frac{1}{2}}} = e^{o(n-1)}
\end{align*}
as $n\to\infty$. Using that $u(-m,n) = u(m,n)$, we see that the numerator counts exactly the number of unimodal sequences of size $n$, denoted by $u(n)$. Using \cite[Theorem 1.1 (1)]{BJM} with $k=0$, which gives
\begin{align*}
	u(n) \sim \frac{1}{8\cdot 3^{\frac{3}{4}} n^{\frac{5}{4}}} e^{\pi \sqrt{\frac{4n}{3}}} = e^{o(n-1)},
\end{align*}
we apply Theorem \ref{Thm: Granville} we conclude that the zeros of $Q_n(w)$ tend toward equidistribution on the unit circle as $n \to \infty$. We record this as a theorem.
\begin{thm}\label{thm: uniequid}
	As $n \to \infty$ the roots of $Q_n(w)$ tend to equidistribution on the unit circle. 
\end{thm}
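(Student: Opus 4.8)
The plan is to verify the three hypotheses of Theorem~\ref{Thm: Granville} for the sequence of polynomials $Q_n(w)$, treating $d = n-1$ as the degree parameter. First I would confirm the structural assumptions: that $Q_n$ has degree $n-1$ (the largest rank of a unimodal sequence of size $n$ being $n-1$, realized by placing all parts to one side of the peak), and that $Q_n(0) = \tfrac12 u(0,n) \neq 0$. The latter requires knowing $u(0,n) > 0$ for each $n$, which is clear since there exist unimodal sequences of every size with equally many parts on either side of the peak (for instance the single-part sequence, giving rank $0$).

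The crux is the estimate $L(Q_n) = e^{o(n-1)}$. Here the key simplification comes from the symmetry $u(-m,n) = u(m,n)$, which I would invoke to rewrite the numerator of $L(Q_n)$. Since all coefficients $u(m,n)$ are non-negative, $\sum_j |a_j|$ for $Q_n$ equals $\tfrac12 u(0,n) + \sum_{1 \leq m < n} u(m,n)$, and by symmetry the quantity $u(0,n) + 2\sum_{1 \leq m < n} u(m,n)$ sums $u(m,n)$ over all $m \in \Z$, which counts the total number of unimodal sequences of size $n$, namely $u(n)$. Thus the numerator of $L(Q_n)$ is exactly $u(n)$ (up to the harmless factor of $\tfrac12$ on the constant term, which only affects a bounded multiplicative constant). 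The denominator is $(2\, u(0,n)\, u(n-1,n))^{1/2}$, a product of two individual coefficients.

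The main obstacle is therefore purely analytic: showing that $u(n) = e^{o(n-1)}$ while the denominator does not decay too fast. The numerator estimate is handled directly by the asymptotic $u(n) \sim \tfrac{1}{8 \cdot 3^{3/4} n^{5/4}} e^{\pi\sqrt{4n/3}}$ from \cite[Theorem 1.1 (1)]{BJM} (with $k=0$), and since $\pi\sqrt{4n/3} = o(n)$, this gives $u(n) = e^{o(n-1)}$. The delicate point is the denominator: one needs that $(u(0,n)\, u(n-1,n))^{1/2}$ is not so small as to make $L(Q_n)$ grow faster than subexponentially. Since $u(0,n) \geq 1$ and $u(n-1,n) \geq 1$ (there is at least one unimodal sequence of size $n$ with maximal rank $n-1$, namely all parts equal to $1$ arranged to the right of the peak), the denominator is bounded below by a constant, so it contributes at most trivially; even a polynomial lower bound would more than suffice. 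Hence $L(Q_n) \leq C\, u(n) = e^{o(n-1)}$.

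With all three hypotheses confirmed, I would simply apply Theorem~\ref{Thm: Granville} to conclude that $\mu_{\{Q_n\}} \to \nu_{\{|z|=1\}}$ weakly as $n \to \infty$, which is precisely the assertion that the roots of $Q_n(w)$ tend to equidistribution on the unit circle. The entire argument is a special case of the general framework described above for symmetric partition-statistic families $s(m,n)$, so no new ideas beyond inserting the specific asymptotic for $u(n)$ are required.
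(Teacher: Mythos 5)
Your proposal is correct and follows essentially the same route as the paper: verify the hypotheses of Theorem~\ref{Thm: Granville}, use the symmetry $u(-m,n)=u(m,n)$ to identify the numerator of $L(Q_n)$ with $u(n)$, invoke the asymptotic from \cite[Theorem 1.1\,(1)]{BJM}, and conclude. Your explicit lower bounds $u(0,n)\geq 1$ and $u(n-1,n)\geq 1$ for the denominator (and the check that $Q_n$ has exact degree $n-1$) are details the paper leaves implicit, but they do not constitute a different argument.
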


\subsection{Non-equidistribution of roots of partition polynomials}
In this section we collect some examples of partition polynomials whose roots numerically appear to not be equidistributed on the unit circle as $n$ grows. In each case, we explain why one cannot appeal to Theorem \ref{Thm: Granville}, and offer some numerical data and open questions.

\subsubsection{Strongly unimodal sequences}\label{sec_sus}

A close relative of unimodal sequences are known as strongly unimodal sequences, where in \eqref{eqn: unimodal} one requires the inequalities to be strict. We denote the number of strongly unimodal sequences of size $n$ with rank $m$ by $u^*(m,n)$, and the number of strongly unimodal sequences of size $n$ by $u^*(n)$. Numerical experiments (using SageMath \cite{sage}) suggest that the roots of the principal polynomial for the rank of strongly unimodal sequences do not tend to equidistribution around the unit circle. Figure \ref{Fig: strongly unimodal} plots the roots of the principal polynomial of the rank of strongly unimodal sequences for $n=200$ and $500$.

Let $R_n(w)$ be the principal polynomial attached to strongly unimodal sequences of size $n$. To apply Theorem \ref{Thm: Granville}, one would need that the growth of strongly unimodal sequences of size $n$ is $o(d(n))$, where $d(n)$ is the degree of $R_n(w)$. By \cite{Rhoades} we have
\begin{align*}
	u^*(n) \sim \frac{\sqrt{3}}{2(24n-1)^{\frac{3}{4}}} \exp\left(\frac{\pi}{6}\sqrt{24n-1}\right)
\end{align*}
as $n \to \infty$. However, it is not difficult to see that $d(n)$ is given by taking the index of next triangular number above $n$ and then taking off $1$ (which is very roughly $\sqrt{n}$). To see this, simply write out the strongly unimodal sequences diagrammatically. The largest possible rank will be given by a strictly decreasing triangle starting with the largest size in the first column, and a single dot in the final column. 

Thus Theorem \ref{Thm: Granville} does not apply, and we do not expect the roots of $R_n(w)$ to tend to be equidistributed on the unit disk as $n$ grows (at least, using this method). 

\begin{question}
	What is the limiting distribution of the roots of $R_n(w)$?
\end{question}

\begin{figure}
	\centering
	\begin{subfigure}{.4\textwidth}
		\centering
		\includegraphics[width=0.9\linewidth]{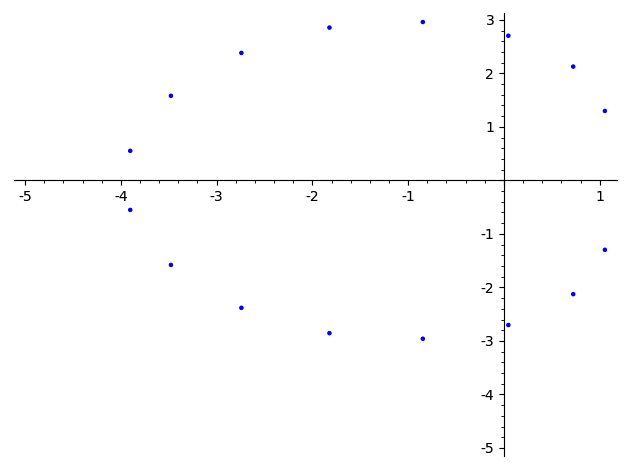}
		\caption{$n=200$}
	\end{subfigure}%
	\begin{subfigure}{.4\textwidth}
		\centering
		\includegraphics[width=0.9\linewidth]{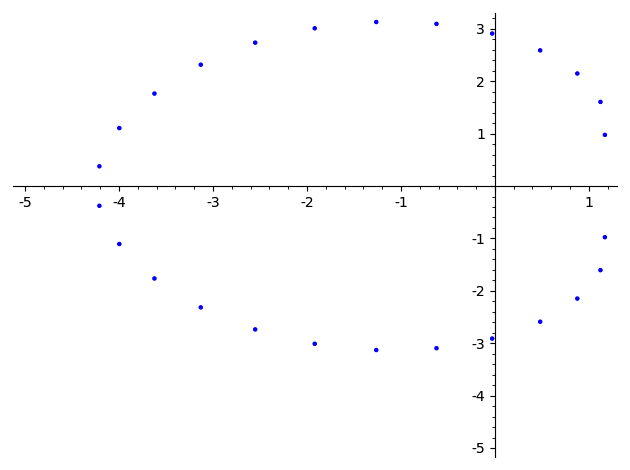}
		\caption{$n=500$}
		\label{$n=500$}
	\end{subfigure}
	\caption{Roots of $R_n(w)$ for two choices of $n$.}
	\label{Fig: strongly unimodal}
\end{figure}

\subsubsection{$t$-hooks}\label{sec_thooks}
Recall the hook length of a partition introduced in Section \ref{Sec: t-core}. The hook lengths which are multiples of a fixed positive integer $t$ are called {\it $t$-hooks}, and been a central object of focus in several recent papers, including work of Bringmann, Craig, Ono, and the second author \cite{BCMO}, who determined an exact formula for the number of $t$-hooks in partitions as well as the non-equidistribution properties over arithmetic progressions.

We also remark that the case of $t=2,3$ appears to be similar numerically. However, the interested researcher aiming to answer this question should be aware that their behaviour may be influenced by the fact that there are arithmetic progressions on which the number of $2$-hooks and $3$-hooks congruent to $a \pmod{b}$ identically vanish. Moreover, the number of $t$-hooks of partitions is not (in general) equidistributed among congruence classes (see \cite{BCMO}).

If we let $\mathcal{H}_t(\lambda)$ denote the multiset of $t$-hooks of a partition $\lambda$, then Han \cite{Han} proved that the generating function for $t$-hooks in partitions 
\begin{equation*}
	H_t(w;q):=\sum_{\lambda \in \mathcal{P}} w^{\# \mathcal{H}_t(\lambda)}q^{|\lambda|} = \sum_{m,n\geq 0} c_t(m,n) w^m q^n
\end{equation*}
takes the following form.

\begin{thm}{\text {\rm (Corollary 5.1 of \cite{Han})}}\label{HanFunction}
	As formal power series, we have
	$$
	H_t(w;q)=\frac{1}{ \prod_{n=1}^{\infty} \left( 1-(w q^t)^n \right)^t}  \prod_{n=1}^{\infty}
	\frac{\left(1-q^{tn}\right)^t}{1-q^n}.
	$$
\end{thm}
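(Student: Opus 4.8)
The plan is to exploit the classical $t$-core/$t$-quotient decomposition of partitions, which converts the statistic $\#\mathcal{H}_t(\lambda)$ into a size statistic and thereby factorizes the generating function. First I would recall that every partition $\lambda$ corresponds bijectively to a pair $(\tilde\lambda, (\lambda^{(0)}, \ldots, \lambda^{(t-1)}))$, where $\tilde\lambda$ is the $t$-core of $\lambda$ (obtained by repeatedly removing rim $t$-hooks) and $(\lambda^{(0)}, \ldots, \lambda^{(t-1)})$ is the $t$-quotient, an arbitrary $t$-tuple of partitions in $\mathcal{P}$. This bijection is cleanly set up via beta-sets (equivalently, the abacus or Maya diagram), and I would take it, together with its two fundamental properties, as known.

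The two properties I need are the size decomposition
\[
|\lambda| = |\tilde\lambda| + t\sum_{i=0}^{t-1} |\lambda^{(i)}|,
\]
and the identification of the number of $t$-hooks with the total size of the quotient,
\[
\#\mathcal{H}_t(\lambda) = \sum_{i=0}^{t-1}|\lambda^{(i)}|.
\]
The second property is the combinatorial heart of the matter: the number of cells of $\lambda$ whose hook length is divisible by $t$ equals the number of rim $t$-hooks that can be stripped off on the way to $\tilde\lambda$, and each such stripping removes exactly one cell from the quotient.

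Granting these, the generating function factorizes along the bijection. Since both $\#\mathcal{H}_t(\lambda)$ and the $t$-divisible part of $|\lambda|$ are governed entirely by the quotient, I would write
\[
H_t(w;q) = \left(\sum_{\tilde\lambda\ t\text{-core}} q^{|\tilde\lambda|}\right)\prod_{i=0}^{t-1}\left(\sum_{\mu\in\mathcal{P}}(wq^t)^{|\mu|}\right).
\]
Each quotient factor is evaluated by Euler's identity $\sum_{\mu\in\mathcal{P}}x^{|\mu|}=\prod_{n\geq1}(1-x^n)^{-1}$ with $x=wq^t$, producing $\prod_{n\geq1}(1-(wq^t)^n)^{-t}$, which matches the first factor of the claimed formula. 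For the $t$-core factor I would specialize the same decomposition to $w=1$: since $H_t(1;q)=\prod_{n\geq1}(1-q^n)^{-1}$, dividing out the quotient contribution $\prod_{n\geq1}(1-q^{tn})^{-t}$ yields
\[
\sum_{\tilde\lambda\ t\text{-core}} q^{|\tilde\lambda|} = \prod_{n\geq1}\frac{(1-q^{tn})^t}{1-q^n}.
\]
Multiplying the two factors gives exactly the stated expression.

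The main obstacle lies entirely in justifying the two properties of the decomposition, which are not formal but rest on the beta-set/abacus formalism; once those are in hand the generating-function computation is routine Eulerian bookkeeping. If one prefers to bypass the bijection, an alternative is a direct proof in the spirit of Han's Nekrasov--Okounkov type hook-length manipulations, but I expect the $t$-quotient route to be both shorter and more transparent.
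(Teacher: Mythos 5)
Your proposal is correct, but note that the paper contains no internal proof to compare against: Theorem \ref{HanFunction} is imported verbatim as Corollary 5.1 of \cite{Han}, with no argument reproduced. Your derivation via the $t$-core/$t$-quotient bijection is a valid, essentially self-contained proof, resting only on the two classical facts you isolate: the size decomposition $|\lambda| = |\tilde{\lambda}| + t\sum_{i}|\lambda^{(i)}|$ and the identity $\#\mathcal{H}_t(\lambda) = \sum_i |\lambda^{(i)}|$; both are standard consequences of the abacus formalism and can be cited from \cite{JK}, where it is shown more strongly that the hook lengths of $\lambda$ divisible by $t$, after division by $t$, form exactly the multiset union of all hook lengths of the quotient partitions. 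Two features of your write-up are worth highlighting. First, obtaining the $t$-core generating function $\sum_{\tilde{\lambda}} q^{|\tilde{\lambda}|} = \prod_{n\geq 1}(1-q^{tn})^t/(1-q^n)$ by specializing your own factorization at $w=1$ (using that $H_t(1;q)$ is the ordinary partition generating function, and that $\prod_{n \geq 1}(1-q^{tn})^{-t}$ is invertible as a formal power series) is a clean economy; it avoids quoting the Garvan--Kim--Stanton/Klyachko formula as a separate input. Second, your route is genuinely lighter than the machinery of \cite{Han}: Han's paper develops the full Nekrasov--Okounkov refinement via Macdonald-type identities, and the corollary in question sits inside that framework, whereas for this particular counting statement the bare $t$-quotient decomposition suffices, exactly as you argue. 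The only point requiring care, which you correctly flag, is that the identification of $\#\mathcal{H}_t(\lambda)$ with the quotient size is not formal; as long as that is cited rather than hand-waved (each rim $t$-hook removal deletes exactly one cell of the quotient and decreases $\#\mathcal{H}_t$ by exactly one, and the $t$-core has no hooks divisible by $t$), the remaining computation is, as you say, routine Eulerian bookkeeping and the factorization matches the stated product exactly.
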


We define the principal polynomial for $t$-hooks by
\begin{align*}
	H_{t,n}(w) \coloneqq \sum_{0\leq m \leq n} c_t(m,n) w^m.
\end{align*}
 and below we include several figures on the roots of $H_{t,n}(w)$ for several choices of $t$ and $n$. As can be seen, the zeros appear to tend to lie on two distinct circles as $n$ grows although these circles may have radii that tend to a common value of $1$. In every case, there are several ``sporadic'' zeros lying far from any apparent circles outside of the unit circle. For the cases $t=4,5$ and $n=2000$ (see Figures \ref{Fig: 4-hook} and \ref{Fig: 5-hook}) there appear to be further zeros far from the two apparent circles.

\begin{question}
	What is the limiting distribution of the zeros of $H_{t,n}(w)$?
\end{question}

\begin{figure}
	\centering
	\begin{subfigure}{.4\textwidth}
		\centering
		\includegraphics[width=.5\linewidth]{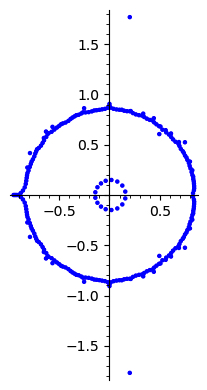}
	\caption{$t=4, n=1000$}
\label{Fig: 4-hook n=1000}
	\end{subfigure}%
	\begin{subfigure}{.4\textwidth}
		\centering
		\includegraphics[width=.5\linewidth]{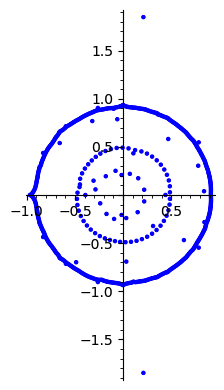}
	\caption{$t=4, n=2000$}
		\label{Fig: 4-hook n=2000}
	\end{subfigure}
	\caption{Roots of the $4$-hook principal polynomial.}
	\label{Fig: 4-hook}
\end{figure}

\begin{figure}
	\centering
	\begin{subfigure}{.4\textwidth}
		\centering
		\includegraphics[width=.45\linewidth]{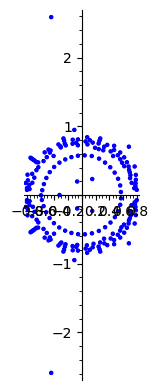}
	\caption{$t=5, n=1000$}
		\label{Fig: 5-hook n=1000}
	\end{subfigure}%
	\begin{subfigure}{.4\textwidth}
		\centering
		\includegraphics[width=.4\linewidth]{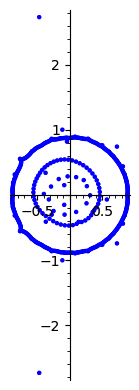}
	\caption{$t=5, n=2000$}
		\label{Fig: 5-hook n=2000}
	\end{subfigure}
	\caption{Roots of the $5$-hook principal polynomial.}
	\label{Fig: 5-hook}
\end{figure}

\begin{figure}
	\centering
	\begin{subfigure}{.4\textwidth}
		\centering
		\includegraphics[width=.5\linewidth]{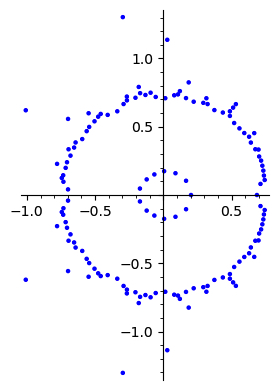}
		\caption{$t=7, n=1000$}
		\label{Fig: 7-hook n=1000}
	\end{subfigure}%
	\begin{subfigure}{.4\textwidth}
		\centering
		\includegraphics[width=.8\linewidth]{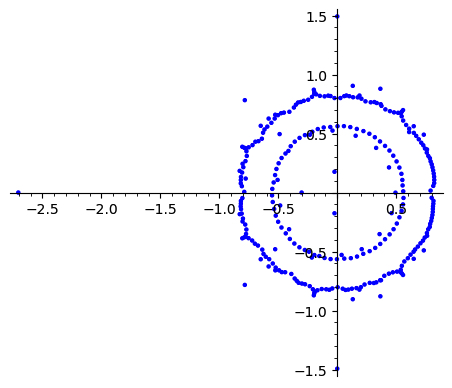}
		\caption{$t=7, n=2000$}
		\label{Fig: 7-hook n=2000}
	\end{subfigure}
	\caption{Roots of the $7$-hook principal polynomial.}
	\label{Fig: 7-hook}
\end{figure}

\FloatBarrier

	\begin{bibsection}
		 
		\begin{biblist}			
 	\bibitem{AndrewsPartitions} G. E. Andrews, \textit{The Theory of Partitions}, Encyclopedia 
	  of Mathematics and its Applications, no.\@ 2,
	Cambridge University Press,  (1984).	
	
	\bibitem{And} G. E. Andrews, \textit{The number of smallest parts in the partitions of n}, J.\ Reine Angew. Math. 624 (2008), 133--142.

			\bibitem{ACK} G. E.	Andrews, H. S. Chan, and B. Kim, \textit{The odd moments of ranks and cranks}, J. Combin. Theory Ser. A 120 (2013), no. 1, 77--91.
			
			\bibitem{ADR} G. E. Andrews, F. J. Dyson, and R. C. Rhoades, \textit{On the distribution of the spt-crank}, Mathematics, 1(3) (2013), 76--88.
			
			\bibitem{AGBAMS} G.E. Andrews and F.G. Garvan, \textit{Dyson's crank of a partition,} Bull. Amer. Math. Soc. (18) no.\ 2 (1988), 167-171.
			
			\bibitem{AGL}  G. E. Andrews, F. G. Garvan, and J. Liang, \textit{Combinatorial interpretations of congruences for the
				spt-function}, Ramanujan J., 29 (2012), no. 1-3, 321--338.
			
			\bibitem{AGL2} G. E. Andrews, F. G. Garvan, and J. Liang, \textit{Self-conjugate vector partitions and the parity of the spt-function}, Acta Arith. 158 (2013), no. 3, 199–218.
			
			\bibitem{ASD} A. O. L. Atkin and P. Swinnerton-Dyer, \textit{Some properties of partitions}, Proc. London Math. Soc. (3) 4 (1954), 84--106.

			\bibitem{BG-Appell} R.P. Boyer and W.M.Y. Goh, \textit{Appell polynomials and their zero attractors}, Gems in experimental mathematics, 69–96,
			Contemp. Math., 517, Amer. Math. Soc., Providence, RI, 2010.
			
			\bibitem{BG-Euler}  R.P. Boyer and W.M.Y. Goh, \textit{On the zero attractor of the Euler polynomials}, Adv. in Appl. Math. 38 (2007), no. 1, 97–132.
			
			\bibitem{BG1} R. P. Boyer and W. M. Y. Goh, \textit{Partition polynomials: asymptotics and zeros}, Tapas in experimental mathematics, 99–111,
			Contemp. Math., 457, Amer. Math. Soc., Providence, RI, 2008.
			
			\bibitem{BG}  R.P. Boyer and W.M.Y. Goh, \textit{Polynomials associated with partitions: asymptotics and zeros,} Special functions and orthogonal polynomials, 33--45, Contemp. Math., 471, Amer. Math. Soc., Providence, RI, 2008.
			
			\bibitem{BP} R.P. Boyer and D. Parry, \textit{On the zeros of plane partition polynomials}, Electron. J. Combin. 18 (2011), no. 2, Paper 30, 26 pp.
			
			\bibitem{Bri} K. Bringmann, \textit{On the explicit construction of higher deformations of partition statistics}, Duke Math. J., 144 (2) (2008), pp. 195-233.
			
			\bibitem{BCMO} K. Bringmann, W. Craig, J. Males, and K. Ono, \textit{Distributions on partitions arising from Hilbert schemes and hook lengths}, Forum. Math. Sigma, 10, E49.
			
			\bibitem{BFOR} K. Bringmann, A. Folsom, K. Ono, and L. Rolen, \textit{Harmonic Maass forms and mock modular forms: theory and applications,} AMS Colloquium Publications, 64. American Mathematical Society, Providence, RI, 2018.  391pp. 
			
				\bibitem{BGRT} K. Bringmann, K. Gomez, L. Rolen, and Z. Tripp, \textit{Infinite families of crank functions, {S}tanton-type conjectures, and unimodality}, Res. Math. Sci., 9 (2022), no. 3, Paper No. 37.

			\bibitem{BJM} K. Bringmann, C. Jennings-Shaffer, and K. Mahlburg, \textit{The asymptotic distribution of the rank for unimodal sequences}, J. Number Theory 229 (2021), 444--462.
			
			\bibitem{BLOver} K. Bringmann and J. Lovejoy, \textit{Rank and congruences for overpartition pairs,} Int. J. Number Theory 4 no.\ 2 (2008), 303-322.   
			
			\bibitem{BL} K. Bringmann and J. Lovejoy, \textit{Rank and congruences for overpartition pairs}, Int. J. Number Theory 4 (2008), no. 2, 303–322.
			
						\bibitem{BMR} K. Bringmann, S.H.~Man, and L. Rolen, {\it Unimodality of Ranks and a Proof of Stanton's Conjecture}, preprint, \url{https://arxiv.org/abs/2209.12239}.
			
			\bibitem{BringmannOno} K. Bringmann and K. Ono {\it Dyson's ranks and Maass forms}, Ann. of Math. (2) {\bf 171} (2010), no. 1, 419--449. 
			
			
			\bibitem{CJZ} W.Y.C. Chen, K. Q. Ji, and W. Zang, \textit{Proof of the Andrews-Dyson-Rhoades conjecture on the spt-crank},
			Adv. Math. 270 (2015), 60–96.
			
			\bibitem{CL} S. Corteel and J. Jovejoy, \emph{Overpartitions,} Trans. Amer. Math. Soc. 356 (2004), 1623-1635.

			\bibitem{Dewar} M. Dewar, \textit{Non-existence of Ramanujan congruences in modular forms of level four}, Canad. J. Math. 63 (2011), no. 6, 1284–1306.
			
		\bibitem{Dyson1944} F.~Dyson, \emph{Some guesses in the theory of partitions}, Eureka \textbf{8} (1944), 10--15.
	
	\bibitem{FKO} A. Folsom, Z. Kent, and K. Ono, \emph{$\ell$-adic properties of the partition function,} Adv. Math. 229 (2012), 1586--1609.
	
	\bibitem{FO} A. Folsom and K. Ono, \emph{The spt-function of Andrews,} Proc. Natl. Acad. Sci. USA, 105 no. 51 (2008), 20152-20156.
	
	\bibitem{GarvanTAMS} F. Garvan, \textit{New combinatorial interpretations of Ramanujan's partition congruences $\mod 5, 7,$ and $11$,} Trans. Amer. Math. Soc (305) no.\ 1 (1988), 47-77. 
	
	\bibitem{Garvan} F. Garvan, \textit{More cranks and $t$-cores}, Bull. Austral. Math. Soc. 63 (2001), 379--391.
	
	\bibitem{GKS} F. Garvan, D. Kim, and D. Stanton, \textit{Cranks and $t$-cores}, Invent. Math. 101 (1990), 1--17.
	
	\bibitem{GJ} F. Garvan and C. Jennings-Shaffer, \textit{The spt-crank for overpartitions}, Acta Arith. 166 (2014), no. 2, 141–188.
	
	\bibitem{GJ2} F. Garvan and C. Jennings-Shaffer, \textit{Exotic Bailey-Slater spt-functions II: Hecke-Rogers-type double sums and Bailey pairs from groups A, C, E},  Adv. Math. 299 (2016), 605–639.
		
		\bibitem{Granville} A. Granville,  \textit{The distribution of roots of a polynomial}, {Equidistribution in number theory, an introduction}, {NATO Sci. Ser. II Math. Phys. Chem.}, 237, 93--102, {Springer, Dordrecht}, 2007.
		
		\bibitem{GranvilleOno} A. Granville and K. Ono, \textit{Defect zero blocks for finite simple groups}, Trans. Amer. Math. Soc. 348 (1996), 331--347.
		
		\bibitem{Han} G.-N. Han, \emph{The Nekrasov-Okounkov hook length formula: refinement, elementary proof, extension and applications}, Ann. Inst. Fourier (Grenoble) \textbf{60} (2010), 1--29.
		
		\bibitem{HardyRamanujan} G. Hardy and S. Ramanujan, \emph{Asymptotic formulae in combinatory analysis},
		Proc. London Math. Soc. Ser. 2 \textbf{17} (1918), 75--115.
		
		\bibitem{JK} G. James and A. Kerber, \textit{The representation theory of the symmetric group}, Addison-Wesley, Reading, 1979.
		
		\bibitem{JenningsShaffer} C. Jennings-Shaffer, \textit{Another SPT crank for the number of smallest parts in overpartitions with even smallest part}, J. Number Theory 148 (2015), 196–203.
		
		\bibitem{JS}  C. Jennings-Shaffer, \textit{Exotic Bailey-Slater spt-functions I: Group A}, Adv. Math. 305 (2017), 479–514.
		
		\bibitem{JS2} C. Jennings-Shaffer, \textit{Exotic Bailey-Slater spt-functions III: Bailey pairs from groups B, F, G, and J},
		Acta Arith. 173 (2016), no. 4, 317–364.

		\bibitem{JoichiStanton} J.T. Joichi and D. Stanton, \textit{Bijetive proofs of basic hypergeometrics series identities,} Pacific J. Math. 127 (1987), 103-120.
		
		\bibitem{LovOver} J. Lovejoy, \emph{Overpartition pairs,}  Ann. Inst. Fourier 56 no.\ 3 (2006), 781--794.  
		
		\bibitem{MacMahon} P.A. MacMahon, \emph{Combinatory Analysis,} Clelsea, New York, 1960.

		\bibitem{OnoSubbarao} K. Ono, \emph{On the Parity of the Partition Function in Arithmetic Progressions,}  J. f\"ur die Reine und Angew. Math. (Crelle's J.) 472 (1996), 1--15.
		
		\bibitem{Radu} S. Radu, {A proof of Subbarao's conjecture,} J. f\"ur die Reine und Angew. Math. (Crelle's J.)  672 (2012), 161-175.
		
		\bibitem{Rhoades} R. C. Rhoades, \textit{Asymptotics for the number of strongly unimodal sequences}, Int. Math. Res. Not. IMRN 2014, no. 3, 700--719.
		
		\bibitem{sage} The Sage developers, {\textit{SageMath, the Sage Mathematics Software system,}}\\
	{\tt https://www.sagemath.org}.
		
		\bibitem{St} R. Stanley, \textit{Log-concave and unimodal sequences in algebra, combinatorics, and geometry}, Graph theory and its applications: East and West (Jinan, 1986), 500–535, Ann. New York Acad. Sci., 576, New York Acad. Sci., New York, 1989.

		\bibitem{Stanton} D. Stanton, 2019: The rank and cranks, Open Problems in Algebraic Combinatorics web blog, \url{https://realopacblog.wordpress.com/2019/09/02/rankcrank/} (accessed Sep. 28 2022).
		
		\bibitem{Toh} P. C. Toh, \textit{Ramanujan type identities and congruences for partition pairs}, Discrete Math. 312 (2012), no. 6, 1244--1250.
		
		\bibitem{Wag} I. Wagner, \textit{Jacobi forms with {CM} and applications}, preprint, \url{https://arxiv.org/abs/2208.01807}.
		
		\bibitem{ZagierRanks} D. Zagier, {\it Ramanujan's mock theta functions and their applications (after Zwegers and Bringmann-Ono)}, S\'eminaire Bourbaki. Vol. 2007/2008. 
Ast\'erisque No. 326 (2009), Exp. No. 986, vii-viii, 143--164 (2010).
		
		\bibitem{ZwegersThesis} S.P. Zwegers, {\it Mock theta functions}, Ph.D. thesis, Universiteit Utrecht, 2002. 
			
		\end{biblist}
	\end{bibsection}

\end{document}